\numberwithin{equation}{section}
\theoremstyle{plain} 
\newtheorem{thm}{Theorem}[section]
\newtheorem{cor}[thm]{Corollary}
\newtheorem{lem}[thm]{Lemma}
\newtheorem{prop}[thm]{Proposition}
\theoremstyle{definition}
\newtheorem{defn}[thm]{Definition}
\newtheorem{ex}[thm]{Example}
\theoremstyle{remark}
\newtheorem{rem}[thm]{Remark}
\newcommand\kk{\Bbbk}
\def\NN{\mathbb N}
\def\ZZ{\mathbb Z}
\def\Z+{{\mathbb Z}_{>0}}
\def\cJ{\mathcal J}
\def\0{\mathbf 0}
\def\ba{\mathbf a}
\def\bb{\mathbf b}
\def\bc{\mathbf c}
\def\be{\mathbf e}
\def\bba{\overline{\mathbf a}}
\def\one{\mathbf 1}
\def\wba{\widetilde{\mathbf a}}
\def\oba{\widehat{\mathbf a}}
\def\fm{\mathfrak m}
\def\wfm{\widetilde{{\mathfrak m}}}
\def\<{\langle}
\def\>{\rangle}
\def\wI{\widetilde{I}}
\def\oI{\overline{I}}
\def\wP{\widetilde{P}}
\def\wS{\widetilde{S}}
\def\wX{\widetilde{X}}
\def\sm{{\mathsf m}}
\def\sn{{\mathsf n}}
\def\st{{\mathsf t}}
\def\bpol{\operatorname{b-pol}}
\def\Hom{\operatorname{Hom}}
\def\Ext{\operatorname{Ext}}
\def\Tor{\operatorname{Tor}}
\def\height{\operatorname{ht}}
\def\pol{\operatorname{pol}}
\def\adeg{\operatorname{adeg}}
\def\pd{\operatorname{proj-dim}}
\def\hba{\widehat{\mathbf a}}
\def\wbb{\widetilde{\mathbf b}}
\begin{document}

\title[{Alexander duality for strongly stable ideals}]{Alexander duality for the alternative polarizations of strongly stable ideals}
\author{Kosuke Shibata}
\address{Department of Mathematics, Okayama University, Okayama, Okayama 700-8530, Japan}
\email{pfel97d6@s.okayama-u.ac.jp}
\author{Kohji Yanagawa}
\address{Department of Mathematics, Kansai University, Suita, Osaka 564-8680, Japan}
\email{yanagawa@kansai-u.ac.jp}
\thanks{The second author is partially supported by JSPS Grant-in-Aid for Scientific Research (C) 19K03456.}
\maketitle
\begin{abstract}
We will define the Alexander duality for strongly stable ideals.  More precisely, for a strongly stable ideal $I \subset \kk[x_1, \ldots, x_n]$ with $\deg(\sm) \le d$ for all $\sm \in G(I)$, its dual $I^* \subset \kk[y_1, \ldots, y_d]$ is a strongly stable ideal with  $\deg(\sm) \le n$ for all $\sm \in G(I^*)$. 
This duality has been constructed by Fl$\o$ystad et al.\! in a different manner, so we emphasis applications here. For example, we will describe the Hilbert series of the local cohomologies $H_\fm^i(S/I)$ using the irreducible decomposition of  $I$ (through the Betti numbers of $I^*$).
\end{abstract}

\section{Introduction} 
 {\it Strongly stable ideals} are monomial ideals defined by a simple condition,  and they appear as the generic initial ideals of homogeneous ideals in the characteristic 0 case (so they are also called  {\it Borel fixed ideals} in this case).

Extending an idea of \cite{NR}, the second author (\cite{Y12}) constructed the {\it alternative polarization} $\bpol(I)$ of a strongly stable ideal $I$. We briefly explain this notion here. Let $S=\kk[x_1, \ldots, x_n]$ be a polynomial ring over a field $\kk$.   For a monomial ideal $I$, $G(I)$ denotes the set of minimal monomial generators of $I$.  
If $I \subset S$ is a strongly stable ideal with  $\deg(\sm) \le d$ for all $\sm \in G(I)$, 
we consider a larger polynomial ring $\wS=\kk[ \, x_{i,j} \mid 1\le i \le n, 1 \le j \le d \, ]$ with the surjection
$f: \wS \ni x_{i,j} \longmapsto x_i \in S$. Then we can construct a squarefree monomial ideal $\bpol(I) \subset \wS$ 
(if there is no danger of confusion, we will simply write $\wI$ for $\bpol(I)$) satisfying the conditions $f(\wI)=I$ and $\beta_{i,j}^{\wS}(\wI) =\beta_{i,j}^S(I)$ for all $i,j$, where $\beta_{i,j}$  stands for the graded Betti number.  The alternative polarization is much more compatible with operations for strongly stable ideals than the standard polarization.  

On the other hand, the Alexander duality for squarefree monomial ideals  is a very powerful tool in the Stanley--Reisner ring theory. For a squarefree monomial ideal $I \subset S$, $I^\vee \subset S$ denotes its Alexander dual. 
There  is a one to one correspondence between the elements  of $G(I)$ and the irreducible components of $I^\vee$.  Let $\wS'=\kk[ \, y_{i,j} \mid 1\le i \le d, 1 \le j \le n \, ]$ be a polynomial ring with the isomorphism $(-)^\st:\wS \ni x_{i,j}\longmapsto y_{j,i} \in \wS'$.  For a strongly stable ideal $I$, there is a strongly stable ideal $I^* \subset \kk[y_1, \ldots, y_d]$ with $\bpol(I^*) = (\bpol(I)^\vee)^\st$. Clearly, the correspondence $I \longleftrightarrow I^*$ should be considered as the Alexander duality for  strongly stable ideals. 

After we finished an earlier version of this paper, we were informed that, in  Fl\o ystad \cite[\S6]{F},  the above duality has been constructed using the notion of generalized (co-)letterplace ideals.
Each approach has each advantage.  The paper \cite{F} treats the duality in a much wider context, but if one starts from the generator set $G(I)$, our construction is more direct (Proposition~\ref{gene} and Theorem~\ref{irred main} give a simple procedure to compute $I^*$ from $G(I)$).  
We will give a complete proof of the existence of the duality, since we will re-use ideas of the proof in later sections.

The outline of the paper is as follows. Section 2 is mainly  devoted to the proof of the existence of the dual $I^*$. If $I$ is a Cohen--Macaulay strongly stable ideal, $\wS/\wI$ is the Stanley--Reisner ring of a ball or a sphere (a ball in most cases), and its canonical module can be easily described. In Section~3, we show the formula 
$$
H(H_{\fm}^i (S/I),  \lambda^{-1})=\displaystyle \sum_{j\in \ZZ}\frac{\beta_{i-j,n-j}(I^{*}) \lambda^j}{(1-\lambda)^j}
$$
on the Hilbert series of the local cohomology module $H_\fm^i(S/I)$.  This is more or less a consequence of  a classical result \cite{ER}, and we will improve this formula later.   

In Section 4, we discuss the relation to the notion of a {\it squarefree strongly stable ideal}, which is a squarefree  analog of a strongly stable ideal. 
For a strongly stable ideal $I \subset S$, Aramova et al \cite{AHH} constructed a  squarefree strongly stable ideal $I^\sigma \subset T =\kk[x_1, \ldots, x_N]$ with $N \gg 0$.   
The class of squarefree strongly stable ideals  is closed under the (usual) Alexander duality of $T$, so our duality can be constructed through $I^\sigma$. However, without $\bpol(I)$, it is hard to compare the algebraic properties of $I^*$ with those of $I$.   

In Section 5, we give a procedure constructing the irreducible decomposition of $\bpol(I)$ from that of a strongly stable ideal $I$. 
As corollaries, we will give formulas on the arithmetic degree $\adeg(S/I)$ and $H(H_\fm^i(S/I), \lambda)$ from the irredundant irreducible decomposition 
$$I =\bigcap_{\ba \in E} \fm^\ba$$ 
with $E \subset  \Z+ \cup  (\Z+)^2 \cup \cdots \cup  (\Z+)^n$. 
Here, for $\ba = (a_1, \ldots, a_t) \in (\ZZ_{>0})^t$ with $t \le n$, 
$\fm^\ba$ denotes the irreducible ideal $(x_1^{a_1}, \ldots, x_t^{a_t})$ of $S$. In this situation, set 
$t(\ba):=t$, $e(\ba) :=a_t$, and  $w(\ba):=n- \sum_{i=1}^t a_i$. Then we have 
$$\adeg (S/I)=\sum_{\ba \in E}e(\ba)$$
and 
$$ H(H_{\fm}^i (S/I), \lambda^{-1})
= \left(\sum_{\substack{\ba \in E, \\ t(\ba)=n-i}} (\lambda^{w(\ba)} + \lambda^{w(\ba)+1}+ \cdots + \lambda^{w(\ba)+e(\ba)-1})\right)  /(1-\lambda)^i. $$

Section 6 gives additional results on the irreducible decompositions of strongly stable ideals. While a  strongly stable ideal $I$ is characterized by the  ``left shift property" on $G(I)$, Theorem~\ref{right shift} states that it is also characterized by  the ``right shift property" on the irreducible components of $I$.

\section{The construction of the duality} 
We introduce the convention and notation used throughout the paper. For a positive integer $n$, set $[n]:=\{1,\, \ldots ,\, n\}$. Let  $S:= \kk[x_1 , \ldots , x_n ]$ be a polynomial ring over a field $\kk$,  and $\fm = (x_1, \ldots, x_n)$ the unique graded maximal ideal of $S$. 
For a monomial ideal $I\subset S$, $G(I)$ denotes the set of minimal monomial generators of $I$. We say an ideal $I \subset S$ is {\it strongly stable}, if it is a monomial ideal, and the condition that $\sm \in G(I),\ x_i | \sm$ and $j<i$ imply $(x_j /x_i) \cdot \sm \in I$ is satisfied.

Let $d$ be a positive integer, and set
$$
\wS:= \kk[\, x_{i,j} \, | \, 1\le i \le n, 1 \le j \le d \,]. 
$$
Note that 
$$
\Theta := \{x_{i,1}-x_{i,j} \, |\, 1\leq i \leq n,\, 2\leq j \leq d\,\} \subset \wS 
$$
forms a regular sequence with the isomorphism $\wS /(\Theta ) \cong S$ induced by $\wS \ni x_{i, j} \longmapsto x_i \in S$. 
\begin{defn}\label{pol}
For a monomial ideal $I\subset S$, a {\it polarization} of $I$ is a squarefree monomial ideal $J\subset \wS$ satisfying the following conditions.
\begin{itemize}
\item[(1)] Through the isomorphism $\wS /(\Theta ) \cong S$, we have $\wS /(\Theta ) \otimes_{\wS} \wS /J \cong S/I$. 
\item[(2)] $\Theta$ forms a $\wS /J$-regular sequence.
\end{itemize}
\end{defn}

For $\ba =(a_1, \ldots, a_n)\in \NN^{n}$, $x^\ba$ denotes the monomial $\prod_{i=1}^{n} x_{i}^{a_i}\in S$. 
For a monomial $x^\ba \in S$ with $\deg (x^\ba) \leq d$, set
$$
\pol (x^{\ba}) := \displaystyle \prod_{1\leq i \leq n} x_{i,1}x_{i,2}\cdots x_{i,a_i} \in \wS. 
$$
If $I\subset S$ is a monomial ideal with $\deg (\sm) \leq d$ for all $\sm\in G(I)$, then it is well-known that
$$
\pol (I) := ( \, \pol (\sm) \, | \, \sm \in G(I) \, )
$$
is a polarization of $I$, which is called the {\it standard polarization}.

\medskip
Any monomial $\sm \in S$ has a unique expression 
\begin{equation}\label{another expression}
\sm = \prod_{i=1}^{e} x_{\alpha_i}  \, \,  \, \, \, {\rm with} \, \, \, \,  1\leq \alpha_1 \leq \alpha_2 \leq \cdots \leq \alpha_e \leq n .
\end{equation}
If $e \, (=\deg (\sm)) \leq d$, we set
\begin{equation}\label{another expression2}
\bpol (\sm) := \prod_{i=1}^e x_{\alpha_i ,i} \in \wS.
\end{equation}
As another expression, for a monomial $x^\ba \in S$ with $\deg (x^\ba) \leq d$, set $b_i := \sum_{j=1}^i a_j$ for each $i \geq 1$ and $b_0 =0$.
Then
$$
 \bpol (x^\ba) = \prod_{\substack{1\leq i\leq n \\ b_{i-1}+1\leq j \leq b_i}}x_{i,j}\in \wS.
$$
For a monomial ideal $I \subset S$  with $\deg (\sm) \leq d$ for all $\sm \in G(I)$ (in the sequel, we always assume this condition), set
$$
\bpol (I) := (\, \bpol (\sm) \, | \, \sm \in G(I)\, ) \subset \wS. 
$$
See the beginning of Example~\ref{main example} below. 

In \cite{Y12}, the second author showed the following. 

\begin{thm}[{\cite[Theorem 3.4]{Y12}}]
If $I\subset S$ is a strongly stable ideal, then $\bpol (I)$ gives a polarization of $I$. 
\end{thm}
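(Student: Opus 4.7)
I verify the two conditions of Definition~\ref{pol} for $J=\bpol(I)$.

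Condition (1) is immediate from the construction. If $\sm = x_{\alpha_1}\cdots x_{\alpha_e}\in G(I)$ with $\alpha_1\le\cdots\le\alpha_e$, then $\bpol(\sm) = x_{\alpha_1,1}\cdots x_{\alpha_e,e}$ is sent to $\sm$ by $f:x_{i,j}\mapsto x_i$, so $f(\bpol(I)) = I$ and hence $\wS/(\bpol(I)+(\Theta))\cong S/I$.

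For condition (2), my plan is to reduce it to a graded Betti-number comparison via the standard Hilbert-series criterion for linear regular sequences. Since $\Theta$ consists of $n(d-1)$ degree-$1$ forms in the standard $\ZZ$-grading of $\wS$, $\Theta$ is a $\wS/\bpol(I)$-regular sequence if and only if
\[
H(\wS/\bpol(I);\,t)\cdot (1-t)^{n(d-1)} \;=\; H\bigl(\wS/(\bpol(I)+(\Theta));\,t\bigr) \;=\; H(S/I;\,t),
\]
equivalently, the $K$-polynomials $K(\wS/\bpol(I);\,t)$ and $K(S/I;\,t)$ coincide, which in turn follows if $\beta_{i,j}^{\wS}(\bpol(I))=\beta_{i,j}^{S}(I)$ in each bi-degree $(i,j)$.

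To establish the Betti-number equality I would transfer the Eliahou--Kervaire linear-quotient presentation of $I$ to $\bpol(I)$. With $G(I)=\{\sm_1,\ldots,\sm_r\}$ ordered by pure lex (largest first), strong stability yields the linear quotient
\[
(\sm_1,\ldots,\sm_{i-1}):\sm_i \;=\; (\,x_k \mid 1\le k<\max(\sm_i)\,),
\]
so $I$ has exactly $\max(\sm_i)-1$ colon variables at step $i$. With the correspondingly ordered generators of $\bpol(I)$, I would show that
\[
(\bpol(\sm_1),\ldots,\bpol(\sm_{i-1})):\bpol(\sm_i) \;=\; (\,x_{k,j_k} \mid 1\le k<\max(\sm_i)\,),
\]
where, for each $k<\max(\sm_i)$, $j_k$ is the position at which $x_k$ appears in the sorted factorization of $(x_k/x_{\max(\sm_i)})\sm_i$. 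Granting this, $\bpol(I)$ inherits linear quotients with precisely $\max(\sm_i)-1$ colon variables at each step, so the usual Betti-number formula for linear quotients, together with the identity $\deg\bpol(\sm)=\deg\sm$, gives $\beta_{i,i+j}^{\wS}(\bpol(I))=\beta_{i,i+j}^{S}(I)$ in every bi-degree, and hence condition (2).

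The main obstacle is verifying the colon-ideal formula above. The inclusion $\supseteq$ uses strong stability: for $k<\max(\sm_i)$ the element $(x_k/x_{\max(\sm_i)})\sm_i$ lies in $I$, hence is divisible by some $\sm_\ell\in G(I)$ with $\sm_\ell$ occurring earlier in pure lex order, and a direct calculation shows that $x_{k,j_k}$ divides $\bpol(\sm_\ell):\bpol(\sm_i)$. The reverse inclusion --- that no further generators beyond the listed $x_{k,j_k}$ are needed --- is the combinatorial crux: for every $\ell<i$ one must exhibit some $x_{k,j_k}$ dividing $\bpol(\sm_\ell):\bpol(\sm_i)$, which is where the degree-sensitive insertion rule defining $\bpol$ (as opposed to the coarser $\pol$) is used in an essential way. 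Once this colon computation is in place, the $K$-polynomial identity and condition (2) follow with no further input.
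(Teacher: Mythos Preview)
The present paper does not prove this theorem; it is quoted from \cite[Theorem~3.4]{Y12} without argument, so there is no in-paper proof against which to compare your attempt.

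On its own merits your strategy is reasonable and close in spirit to what is done in the companion paper \cite{OY}, where an explicit Eliahou--Kervaire-type minimal free resolution of $\bpol(I)$ is built. But your write-up is openly incomplete at the one place that matters: the colon identity
\[
(\bpol(\sm_1),\ldots,\bpol(\sm_{i-1})):\bpol(\sm_i)\;=\;(\,x_{k,j_k}\mid 1\le k<\nu(\sm_i)\,)
\]
is only asserted, with the $\subseteq$ inclusion flagged as ``the combinatorial crux'' and left undone. Everything downstream---the Betti equality, the $K$-polynomial identity, and the regularity of $\Theta$---rests on this, so until that inclusion is established you do not have a proof.

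Two smaller points. First, the Hilbert-series criterion is a detour you need not take, and its justification for a sequence of several linear forms is less automatic than you suggest. Once the linear-quotient colons for $\bpol(I)$ are known and seen to specialize under $x_{i,j}\mapsto x_i$ to the colons for $I$, the Herzog--Takayama mapping-cone resolution of $\bpol(I)$ specializes to the Eliahou--Kervaire resolution of $I$; the latter is exact, hence $\Tor^{\wS}_i(\wS/\bpol(I),S)=0$ for all $i>0$, which is precisely the Koszul-homology vanishing equivalent to $\Theta$ being $\wS/\bpol(I)$-regular. Second, be careful with the ordering: pure lex on $G(I)$ does not in general yield the colon $(x_k\mid k<\nu(\sm_i))$ when generators have different degrees; you want an ordering compatible with the Eliahou--Kervaire decomposition (e.g.\ refine first by $\nu$, then lexicographically).
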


In the rest of the paper, the next fact is frequently used without comment.   

\begin{lem}
Let $I\subset S$ be a strongly stable ideal. 
For a monomial $\sm \in S$ with $\deg(\sm) \le d$, $\sm \in I$ if and only if $\bpol(\sm) \in \bpol(I)$. 
\end{lem}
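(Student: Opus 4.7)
The plan is to handle the two directions separately. The implication $\bpol(\sm)\in\bpol(I)\Rightarrow\sm\in I$ is straightforward: the surjection $f\colon\wS\to S$, $x_{i,j}\mapsto x_i$, satisfies $f(\bpol(\sn))=\sn$ for every monomial $\sn\in S$ with $\deg\sn\le d$, so it carries the generating set of $\bpol(I)$ onto that of $I$; hence $f(\bpol(I))=I$, and in particular $\sm=f(\bpol(\sm))\in I$.

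For the converse, I would first identify when divisibility is preserved by $\bpol$. Writing $\sm$ in its sorted form $\sm=\prod_{i=1}^{e}x_{\alpha_i}$ as in (\ref{another expression}), the formula (\ref{another expression2}) shows that $\bpol(\sn)\mid\bpol(\sm)$ holds precisely when $\sn$ is a \emph{prefix} of $\sm$, i.e.\ $\sn=\prod_{i=1}^{g}x_{\alpha_i}$ for some $0\le g\le e$. So the claim reduces to finding a minimal generator of $I$ that is a prefix of $\sm$. The main obstacle is that $\bpol$ is not in general compatible with divisibility --- for instance $x_2\mid x_1x_2$ but $\bpol(x_2)=x_{2,1}\nmid x_{1,1}x_{2,2}=\bpol(x_1x_2)$ --- so one cannot merely polarize an arbitrary divisibility witness of $\sm\in I$; the strongly stable hypothesis is what will allow a witnessing generator to be ``shifted leftward'' onto a prefix.

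To produce the prefix, I would pick $\sn_0\in G(I)$ dividing $\sm$ of minimal degree $g$ among all such choices, and write $\sn_0=\prod_{j=1}^{g}x_{\beta_j}$ in sorted form. The divisibility $\sn_0\mid\sm$ forces $\beta_j=\alpha_{k_j}$ for some $k_1<\cdots<k_g$, and hence $\beta_j\ge\alpha_j$ for each $j$. The strongly stable property (which extends from $G(I)$ to all elements of $I$ by a standard one-line argument) then permits replacing the $\beta_j$'s by the $\alpha_j$'s one at a time while remaining in $I$, so that $\sn:=\prod_{j=1}^{g}x_{\alpha_j}\in I$. Any minimal generator of $I$ dividing $\sn$ also divides $\sm$ and has degree $\le g$, so by minimality of $g$ it has degree exactly $g$ and therefore equals $\sn$; thus $\sn\in G(I)$. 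Since $\sn$ is a prefix of $\sm$, we conclude $\bpol(\sn)\mid\bpol(\sm)$, whence $\bpol(\sm)\in\bpol(I)$.
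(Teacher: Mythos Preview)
Your proof is correct. The paper does not actually argue this lemma: it cites \cite[Lemma~3.1]{Y12} for the forward implication $\sm\in I\Rightarrow\bpol(\sm)\in\bpol(I)$ and dismisses the converse as ``an easy exercise.'' Your handling of the converse via the specialization $f\colon\wS\to S$ with $f(\bpol(I))=I$ is exactly the intended exercise. For the forward direction you supply a self-contained argument where the paper only gives a reference: the key observation that $\bpol(\sn)\mid\bpol(\sm)$ holds precisely when $\sn$ is a \emph{prefix} of $\sm$ in the sorted expression \eqref{another expression}, followed by a strongly-stable left shift to turn an arbitrary minimal generator dividing $\sm$ into a prefix generator, is the natural route and is in the same spirit as the cited lemma. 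One minor remark: your intermediate claim that divisibility $\sn_0\mid\sm$ yields $\beta_j=\alpha_{k_j}$ with $k_1<\cdots<k_g$ is correct but slightly more than you need---the consequence $\beta_j\ge\alpha_j$ already follows from the count $\#\{i:\alpha_i\le\beta_j\}\ge\#\{i:\beta_i\le\beta_j\}\ge j$.
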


\begin{proof}
The necessity is shown in \cite[Lemma~3.1]{Y12}, and the sufficiency is an easy exercise.
 \end{proof}

An {\it irreducible monomial ideal} of $S$ is an ideal of the form  $(\, x_i^{a_i} \, |\, a_i >0  )$ 
for some $\ba \in \NN^{n}$.
A presentation of a monomial ideal $I$ as an intersection $I=\bigcap_{i=1}^r Q_i$ of irreducible monomial ideals is called an {\it irreducible decomposition}. 
An intersection $I=\bigcap_{i=1}^r Q_i$ is {\it irredundant}, if none of the ideals $Q_i$ can be omitted in this presentation. Any monomial ideal has a unique irredundant irreducible decomposition $I=\bigcap_{i=1}^r Q_i$. In this case, each $Q_i$ is called an {\it irreducible component} of $I$. If $I$ is a squarefree monomial ideal, then the  irreducible components are nothing other than the associated primes.

If $I \subset S$ is a squarefree monomial ideal (equivalently, $S/I$ is the Stanley--Reisner ring of some simplicial complex), then the irreducible components of $I$ are of the form $\fm^F := (x_i \mid i \in F)$ for some $F \subset [n]$, and the ideal 
$$I^{\vee} := \Bigl( \, \prod_{i \in F} x_i \mid \text{$\fm^F$ is an irreducible component of $I$} \Bigr)$$ 
called the {\it Alexander dual} of $I$. Then we have $I^{\vee \vee} =I$. 
This duality is very important in the Stanley--Reisner ring theory. See, for example, \cite{ER,MS}. 

\begin{lem}\label{characterization}
For a monomial ideal $I \subset S$, the following conditions are equivalent.
\begin{itemize}
\item[(1)] $I$ is strongly stable.
\item[(2)] $\bpol(I) \subset \wS$ has an irreducible decomposition $\bigcap_{s=1}^r P_s$ satisfying the following property.  
\begin{itemize}
\item[($*$)]For each $s$, there is a positive integer $t_s$, and  integers 
$\gamma_i^{\<s\>}$ for $1 \le i \le t_s$ such that $P_s =( \, x_{i, \gamma^{\<s\>}_i} \mid 1 \le i \le t_s \, )$ and $1 \le \gamma_1^{\<s\>} \le  \gamma_2^{\<s\>} \le  \cdots \le \gamma_{t_s}^{\<s\>}$. 
\end{itemize}
\end{itemize}
\end{lem}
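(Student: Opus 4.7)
The plan is to exploit the dual staircase structure of $\bpol$: the support of $\bpol(\sm) = \prod_{k=1}^{e} x_{\alpha_k, k}$ is a non-decreasing column-path $k \mapsto \alpha_k$ in $[n] \times [d]$, while condition~($*$) says each $P_s$'s set of generating variables is itself a non-decreasing row-path $i \mapsto \gamma_i^{\<s\>}$ on $[1, t_s]$. The lemma then reduces to a combinatorial statement about when two such monotone paths must intersect.

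For $(2)\Rightarrow(1)$, given $\sm \in G(I)$ with $x_i \mid \sm$ and $j < i$, set $\sm' := (x_j/x_i)\sm$. Since any $\sn \in G(I)$ with $\bpol(\sn) \mid \bpol(\sm')$ in $\wS$ yields $\sn \mid \sm'$ in $S$ by applying $f : \wS \to S$, it suffices to prove $\bpol(\sm') \in \bpol(I)$. Writing $\sm = \prod x_{\alpha_k}$ and $\sm' = \prod x_{\beta_k}$ in sorted form, a straightforward multiset comparison gives $\beta_k \le \alpha_k$ for every $k$. Fix $s$ and, wherever meaningful, define the non-decreasing functions $\phi(i) := \alpha_{\gamma_i^{\<s\>}}$ and $\delta(i) := \beta_{\gamma_i^{\<s\>}}$; then $\delta \le \phi$ pointwise. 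The hypothesis $\bpol(\sm) \in P_s$ provides a fixed point $i^*$ of $\phi$. Since $\delta(1) \ge 1$ and $\delta(i^*) \le \phi(i^*) = i^*$, the discrete intermediate value principle for non-decreasing integer-valued functions produces some $i^{**} \in [1, i^*]$ with $\delta(i^{**}) = i^{**}$, which yields $\bpol(\sm') \in P_s$.

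For $(1)\Rightarrow(2)$, let $P$ be a minimal prime of $\bpol(I)$ with variable set $F \subseteq [n] \times [d]$. I verify the staircase form of $F$ via three claims, each argued by contradiction to the minimality of $P$: (a) the first coordinates in $F$ form an initial interval $[1, t]$; (b) each first coordinate appears exactly once; (c) the resulting map $\gamma : [1, t] \to [1, d]$ is non-decreasing. Each claim is proved by selecting, via minimality of $P$, a witness generator $\sm_0 \in G(I)$ whose $\bpol$-path meets $F$ only at a designated cell, and applying a suitable Borel move to produce $\sm_0' \in I$ whose $\bpol$-path (by strong stability and the preceding lemma) still lies in $P$; a case analysis on the shifted interval $[k^*, k_0]$ on which $\beta$ differs from $\alpha$ then forces the contradiction. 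The main obstacle in this direction is the delicate choice of the witness $\sm_0$ in claim~(a): depending on where the first $i_0$ of $\alpha$ sits, the shifted path may or may not miss $F$, so one must either fix the witness to have its designated cell at the leftmost $i_0$-position of $\alpha$, or argue indirectly via a second Borel move.
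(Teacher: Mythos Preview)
Your argument for $(2)\Rightarrow(1)$ is correct and is packaged differently from the paper's. The paper argues by contradiction: assuming $(x_j/x_{j+1})\sm\notin I$ for an adjacent swap, it picks a component $P_s$ missing $\bpol((x_j/x_{j+1})\sm)$ and runs a cascading chain of inequalities $\gamma_{j+1}^{\<s\>}=b_j+1,\ \gamma_j^{\<s\>}\le b_{j-1},\ \ldots,\ \gamma_1^{\<s\>}\le b_0=0$. Your version is a direct proof for an arbitrary swap $j<i$: you observe that $\bpol(\sm)\in P_s$ is a fixed point of the non-decreasing composite $\phi=\alpha\circ\gamma^{\<s\>}$, and since passing to $\sm'$ only lowers this composite pointwise, a discrete intermediate-value step produces a fixed point for $\delta=\beta\circ\gamma^{\<s\>}$. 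The two arguments encode the same monotone-path intersection phenomenon; your fixed-point formulation is a bit cleaner and avoids the reduction to adjacent swaps. One small point worth making explicit: the reduction ``$\bpol(\sm')\in\bpol(I)\Rightarrow\sm'\in I$'' that you justify via $f$ is exactly what is needed here, since the ``preceding lemma'' in the paper is stated only for strongly stable $I$ and therefore cannot be quoted in this direction.

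For $(1)\Rightarrow(2)$ the paper simply cites \cite[Remark~3.3]{Y12}, so there is nothing to compare against. Your outline (initial segment of first indices, uniqueness, monotonicity, each via a minimality witness plus a Borel move) is the natural route and can be made to work, but as written it is a sketch rather than a proof. In particular, the ``case analysis on the shifted interval $[k^*,k_0]$'' is not carried out, and the fix you hint at for claim~(a) has analogues in claims~(b) and~(c) that you do not address: a single Borel move $x_{i_1}\to x_{i_0}$ only alters $\beta$ at the \emph{leftmost} occurrence of $i_1$ in $\alpha$, so if the designated cell sits strictly to the right, the perturbed path still meets $F$ there. The clean remedy is to replace \emph{all} occurrences of the offending variable (e.g.\ all $x_{i_0+1}$'s by $x_{i_0}$'s in claim~(a)), which keeps $\sm_0'\in I$ by strong stability and changes $\beta$ on the whole block; you should say this explicitly and then check, for each claim, that no new intersection with $F$ is created. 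The special case $i_0=1$ in claim~(c) also deserves a word, since there is no Borel move lowering $x_1$.
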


\begin{proof}
(1) $\Rightarrow$ (2): This is shown already in  \cite[Remark 3.3]{Y12}.

(2) $\Rightarrow$ (1): 
For a contradiction, assume that $\wI:=\bpol(I)$ satisfies the condition ($*$) but $I$ is not strongly stable. Then it is easy to see that there is some $\sm =x^\ba \in G(I)$ such that $x_{j+1} \, | \,  \sm$ and $(x_j/x_{j+1} ) \cdot \sm \not \in I$ for some $j < n$.  Then we have $\bpol((x_j/x_{j+1} ) \cdot \sm) \not \in \bpol(I)$, and it implies that $\bpol((x_j/x_{j+1} ) \cdot \sm) \not \in P_s =( x_{1,\gamma_1^{\<s\>}},  x_{2,\gamma_2^{\<s\>}},\ldots,  x_{t_s,\gamma_{t_s}^{\<s\>}})$ for some $s$.  As before, set $b_0 :=0$ and $b_i :=\sum_{j=1}^i a_j$ for $i \ge 1$. 
Since 
$$\bpol (\sm) = \prod_{\substack{1\leq i\leq n \\ b_{i-1}+1\leq j \leq b_i}}x_{i,j}, $$
we have 
$\gamma_i^{\< s\>} \not \in \{\, b_{i-1}+1, \ldots, b_i \, \}$ for all $i \ne j, j+1$, 
$\gamma_j^{\< s\>} \not \in \{\, b_{j-1}+1, \ldots, b_j +1\, \}$, and 
$\gamma_{j+1}^{\< s\>} \not \in \{\, b_j+2, \ldots, b_{j+1} \, \}$. 
Here we have $\bpol(\sm) \in \bpol (I) \subset P_s$, and it implies $\gamma_{j+1}^{\< s\>} = b_j+1$. 
Since $\gamma_j^{\<s\>} \le \gamma_{j+1}^{\< s\>} \, (= b_j+1)$ and $\gamma_j^{\< s\>} \not \in \{\, b_{j-1}+1, \ldots, b_j +1\, \}$, we have $\gamma_j^{\<s\>}  \le b_{j-1}$.  If $j \ge2$, combining $\gamma_{j-1}^{\<s\>} \le \gamma_j^{\<s\>}  \, 
(\le b_{j-1})$ with $\gamma_{j-1}^{\< s\>} \not \in \{\, b_{j-2}+1, \ldots, b_{j-1} \, \}$, we have $\gamma_{j-1}^{\< s\>} \le 
b_{j-2}$. Repeating this argument, we have $\gamma_1^{\< s\>} \le b_0$. 
Since $\gamma_1^{\< s\>} \ge 1$ and  $b_0 =0$, this is a contradiction. 
\end{proof}

Let $\wS':= \kk[\, y_{i,j} \, | \, 1\le i \le d, 1 \le j \le n \,]$ be a polynomial ring with the ring isomorphism 
$(-)^{\st}:\wS \to \wS'$ defined by $\wS \ni x_{i,j} \longmapsto y_{j, i} \in \wS'$.   

\begin{thm}[c.f. {\cite{F}}]
Let $I \subset S$ be a strongly stable ideal. Then there exists a strongly stable ideal $I^* \subset S':=\kk[y_1, \ldots, y_d]$ such
that  $\bpol(I^*) = (\bpol(I)^\vee)^\st$. 
\end{thm}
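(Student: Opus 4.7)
The plan is to apply Lemma~\ref{characterization} in both directions, using the symmetry between Alexander duality and the transpose $(-)^\st$.

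First I would invoke the implication (1)$\Rightarrow$(2) of Lemma~\ref{characterization} to fix an irreducible decomposition $\wI=\bpol(I)=\bigcap_{s=1}^{r}P_s$ with $P_s=(x_{i,\gamma_i^{\<s\>}}\mid 1\le i\le t_s)$ and $\gamma_1^{\<s\>}\le\cdots\le\gamma_{t_s}^{\<s\>}$. Taking the Alexander dual of this squarefree ideal and applying $(-)^\st$ yields
$$(\wI^\vee)^\st=\Bigl(\,\tau_s:=\prod_{i=1}^{t_s}y_{\gamma_i^{\<s\>},i}\ \Big|\ s=1,\dots,r\,\Bigr)\subset\wS'.$$
Each $\tau_s$ already has the form of a $\bpol$-image in $\wS'$: explicitly $\tau_s=\bpol(\sn_s)$ for $\sn_s:=\prod_{i=1}^{t_s}y_{\gamma_i^{\<s\>}}\in S'$ of degree $t_s\le n$, since the first indices $\gamma_i^{\<s\>}$ are weakly increasing in $i$. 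Dually, the minimal generators $\bpol(\sm)$ of $\wI$, for $\sm\in G(I)$, correspond under Alexander duality to the irreducible components of $(\wI^\vee)^\st$: writing $\sm=x_{\iota(1)}\cdots x_{\iota(e)}$ with $\iota(1)\le\cdots\le\iota(e)$, the corresponding component is $Q_\sm=(y_{j,\iota(j)}\mid 1\le j\le e)$, and these $Q_\sm$ again satisfy condition $(*)$ of Lemma~\ref{characterization}, now interpreted in $\wS'$.

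Next I would define $I^*\subset S'$ as the ideal generated by all monomials $\sn\in S'$ with $\deg(\sn)\le n$ and $\bpol(\sn)\in(\wI^\vee)^\st$. It is enough to prove that $I^*$ is strongly stable: once that is known, the preceding lemma gives $\bpol(\sn)\in\bpol(I^*)$ for every $\sn\in I^*$ with $\deg(\sn)\le n$, so each $\tau_s=\bpol(\sn_s)$ lies in $\bpol(I^*)$ (note $\sn_s\in I^*$ by construction), yielding $(\wI^\vee)^\st\subseteq\bpol(I^*)$; the reverse inclusion is immediate from the definition of $I^*$.

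The heart of the argument is thus the verification of strong stability for $I^*$. Using the decomposition $(\wI^\vee)^\st=\bigcap_{\sm\in G(I)}Q_\sm$, it suffices to show, for each fixed $\sm\in G(I)$, that the membership $\bpol(\sn)\in Q_\sm$ is preserved under a left shift $\sn\mapsto\sn':=(y_k/y_l)\sn$ with $k<l$ and $y_l\mid\sn$. Writing $\sn=y_{\alpha_1}\cdots y_{\alpha_e}$ and $\sn'=y_{\alpha'_1}\cdots y_{\alpha'_e}$ in sorted order, a cumulative-count comparison gives the key inequality $\alpha'_p\le\alpha_p$ for every $p$. The condition $\bpol(\sn)\in Q_\sm$ translates to the existence of $j^*\in\{1,\dots,e(\sm)\}$ with $\iota_\sm(j^*)\le e$ and $\alpha_{\iota_\sm(j^*)}=j^*$; setting $j_0:=j^*$ and iterating $j_{h+1}:=\alpha'_{\iota_\sm(j_h)}$, the sequence is weakly decreasing (since $\iota_\sm$ and $\alpha'$ are weakly increasing and $\alpha'\le\alpha$ componentwise) and bounded below by $1$, so it stabilizes at some $\widehat{\jmath}$ with $\alpha'_{\iota_\sm(\widehat{\jmath})}=\widehat{\jmath}$. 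This produces a variable $y_{\widehat{\jmath},\iota_\sm(\widehat{\jmath})}$ of $Q_\sm$ dividing $\bpol(\sn')$, which gives $\bpol(\sn')\in Q_\sm$ as required. This iteration, enabled by the form $(*)$ of the components $Q_\sm$, is the main technical step.
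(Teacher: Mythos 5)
Your proof is correct, but it handles the crucial step --- strong stability of $I^*$ --- by a genuinely different route than the paper. Both proofs start identically: fix the $(*)$-form irredundant decomposition of $\wI$ via Lemma~\ref{characterization} (1)$\Rightarrow$(2), compute $(\wI^\vee)^\st=(\tau_s \mid s)$ with $\tau_s=\bpol(\sn_s)$, and identify the irreducible components $Q_\sm$ of $(\wI^\vee)^\st$. At that point the paper simply declares $I^*:=(\sn_s\mid s)$, asserts $\bpol(I^*)=(\wI^\vee)^\st$ (tacitly using that the $\sn_s$ are pairwise incomparable under divisibility, hence exactly $G(I^*)$ --- a true but unargued point), and then concludes strong stability by one application of Lemma~\ref{characterization} (2)$\Rightarrow$(1) to the $(*)$-form decomposition $\bigcap Q_\sm$. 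You instead define $I^*$ via the membership criterion, prove strong stability directly by the combinatorial iteration $j_0,j_1,\dots$ converging to a fixed point $\widehat{\jmath}$ with $\alpha'_{\iota(\widehat{\jmath})}=\widehat{\jmath}$, and only afterwards deduce $\bpol(I^*)=(\wI^\vee)^\st$. Your iteration is in effect a self-contained re-proof of the (2)$\Rightarrow$(1) implication in this specific situation; this detour is forced by your logical ordering, since that lemma cannot be invoked until one knows $(\wI^\vee)^\st$ is of the form $\bpol(-)$. The trade-off: the paper's argument is shorter by reusing its lemma, while yours is more explicit, verifies the closure under left shifts by hand, and neatly sidesteps the unstated minimality claim about the $\sn_s$. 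Both approaches establish that your $I^*$ and the paper's coincide, so the theorem follows either way.
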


\begin{proof}
As before, set $\wI:=\bpol(I)$. 
There is a one to one correspondence between the irreducible components of $\wI$ and the elements of $G(\wI^\vee)$. 
If the irrdundant irreducible decomposition of $\wI$ is given by 
$$
\wI = \bigcap_{s=1}^r ( \, x_{i, \gamma^{\<s\>}_i} \mid 1 \le i \le t_s \, ) \subset \wS,
$$
then we have 
$$(\wI^\vee)^\st = \Bigl( \, \prod_{i=1}^{t_s} y_{\gamma^{\<s\>}_i, i} \mid 1 \le s \le r \, \Bigr) \subset \wS'. $$
Since $\gamma_1^{\<s\>} \le  \gamma_2^{\<s\>} \le  \cdots \le \gamma_{t_s}^{\<s\>}$ by Lemma~\ref{characterization}, we have $\bpol(I^*)= (\wI^\vee)^\st$ for 
$$
I^* = \Bigl( \, \prod_{i=1}^{t_s} y_{\gamma^{\<s\>}_i } \mid 1 \le s \le r \, \Bigr)  \subset S'. 
$$

There also exists  a one to one correspondence between the irreducible components of $\wI^\vee$ and the elements of $G(\wI)$, equivalently, the elements of $G(I)$. If the monomial $\sm$ in  \eqref{another expression} belongs to $G(I)$, the irreducible component of $\wI^\vee$ given by $\sm$ is of the form  $(x_{\alpha_1,1}, x_{\alpha_2,2}, \ldots, x_{\alpha_e,e})$ by the expression \eqref{another expression2}. 
Then the  corresponding  irreducible component of $(\wI^\vee)^\st \, (= \bpol(I^*))$ is $(y_{1, \alpha_1}, \ldots, y_{e, \alpha_e}) \subset \wS'$. Since $\alpha_1 \le \cdots \le \alpha_e$, $I^*$ is strongly stable 
by Lemma~\ref{characterization}. 
\end{proof}
 
The above theorem gives a duality between strongly stable ideals $I \subset S=\kk[x_1, \ldots, x_n]$ whose generators have degree at most $d$ and strongly stable ideals $I^* \subset S'=\kk[y_1, \ldots, y_d]$ whose generators have degree at most $n$.

\begin{ex}\label{main example}
For a strongly stable ideal $I = (x_{1}^2, x_{1}x_{2}, x_{1}x_{3}, x_{2}^2 , x_{2}x_{3})$, we have
\begin{eqnarray*}
\bpol (I) &=& (\, x_{1,1}x_{1,2},\,  x_{1,1}x_{2,2},\,  x_{1,1}x_{3,2},\,  x_{2,1}x_{2,2},\,  x_{2,1}x_{3,2}\, ) \\
             &=& (\, x_{1,1},\,  x_{2,1}\, )\cap (\, x_{1,1},\,  x_{2,2},\,  x_{3,2}\, ) \cap (\, x_{1,2},\,  x_{2,2},\,  x_{3,2}\, ) \\
 \bpol (I)^\vee &=& (\, x_{1,1}x_{2,1},\,  x_{1,1}x_{2,2}x_{3,2},\,  x_{1,2}x_{2,2}x_{3,2}\, ) \\
 {(\bpol (I)^\vee)}^\st &=& (\, y_{1,1}y_{1,2},\,  y_{1,1}y_{2,2}y_{2,3},\,  y_{2,1}y_{2,2}y_{2,3}\, ), 
\end{eqnarray*}
hence the dual strongly stable ideal is given by 
$$
I^* = (\, y_{1}^2, \,  y_{1}y_{2}^2 ,\,   y_{2}^3 \, ). 
$$

On the other hand, if we use the standard polarization, we have 
\begin{eqnarray*}
\pol (I) &=&(\, x_{1,1}x_{1,2},\,  x_{1,1}x_{2,1},\,  x_{1,1}x_{3,1},\,  x_{2,1}x_{2,2},\, x_{2,1}x_{3,1}\, ) \\
           &=&(\, x_{1,1}, x_{2,1}\, )\cap (\, x_{1,1}, x_{2,2}, x_{3,1}\, )\cap (\, x_{1,2}, x_{2,1}, x_{3,1}\, ) \\
\pol (I)^\vee &=&(\, x_{1,1}x_{2,1},\,  x_{1,1}x_{2,2}x_{3,1},\,  x_{1,2}x_{2,1}x_{3,1}\, ).  
\end{eqnarray*}
Here $(\pol (I)^\vee)^\st =(y_{1,1}y_{1,2},\,  y_{1,1}y_{1,3}y_{2,2},\,  y_{1,2}y_{1,3}y_{2,1} )$  can not be the standard or altarnative polarization of any ideal. 
\end{ex}

\medskip

The next two results are implicitly contained in Fl\o ystad  \cite{F}.  However  they are stated in the context of the preceding papers \cite{FGH, DFN}, where the words ``letterplace ideal" and ``coletterplace ideals" are used in the narrow sense (see Remark~\ref{Gunnar} below).  

\begin{prop}\label{letterpalce}
If $I \subset S$ is a strongly stable ideal with $\sqrt{I}= \fm$, then $\bpol(I)$ (more precisely, $\bpol(I)^\st$) is the letterplace ideal $L(\cJ; d, [n])$ in the sense of \cite{DFN}. Here $\cJ$ is an order ideal of $\Hom([n],[d])$. Conversely, any letterplace ideal  $L(\cJ; d, [n])$ arises in this way from a strongly stable ideal $I$ with $\sqrt{I}= \fm$. 
\end{prop}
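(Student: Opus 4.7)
My plan is to establish a direct correspondence between the monomial structure of $\bpol(I)^\st$ and the letterplace monomials indexed by an order ideal $\cJ \subset \Hom([n], [d])$ in the sense of \cite{DFN}. The core computation is that, for $\sm \in G(I)$ with the standard factorization $\sm = \prod_{k=1}^e x_{\alpha_k}$ (where $\alpha_1 \le \cdots \le \alpha_e \le n$, $e \le d$) from \eqref{another expression}, applying $(-)^\st$ to \eqref{another expression2} yields
$$\bpol(\sm)^\st \; = \; \prod_{k=1}^e y_{k, \alpha_k} \; \in \; \wS'.$$
This is precisely the letterplace monomial of \cite{DFN} attached to the isotone sequence $(\alpha_1, \ldots, \alpha_e)$, viewed as an element of $\Hom([n], [d])$ via the canonical encoding of a weakly increasing length-$e$ sequence in $[n]$ (e.g., its counting function $i \mapsto |\{k : \alpha_k \le i\}|$, which is isotone and takes values in $\{0, 1, \ldots, d\}$).

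For the forward direction, I would define $\cJ \subset \Hom([n], [d])$ to be the set of (encoded) isotone sequences corresponding to \emph{all} monomials $\sm' \in I$ with $\deg \sm' \le d$, not only the minimal ones. Two verifications are needed: (i) $\cJ$ is an order ideal under the componentwise order on $\Hom([n], [d])$; and (ii) $\bpol(I)^\st = L(\cJ; d, [n])$. For (i), if $\sm \in I$ corresponds to $\phi \in \cJ$ and $\psi \le \phi$ componentwise, then the monomial attached to $\psi$ is obtained from $\sm$ by a sequence of single-coordinate decreases, each of which is a strong-stability move, so it lies in $I$ and thus $\psi \in \cJ$. The role of $\sqrt{I} = \fm$ is to force $x_n^d \in I$ (via strong stability starting from any $x_n^e \in I$), which guarantees that the ``top'' element of $\Hom([n], [d])$ lies in $\cJ$, so $\cJ$ is a genuine full order ideal in the \cite{DFN} sense. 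For (ii), both ideals are generated by the same set $\{\bpol(\sm')^\st : \sm' \in I, \deg \sm' \le d\}$, whose minimal elements are exactly $\{\bpol(\sm)^\st : \sm \in G(I)\}$ by construction.

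For the converse, starting from an order ideal $\cJ \subset \Hom([n], [d])$, I would decode each element of $\cJ$ to a monomial in $S$ and let $I \subset S$ be the ideal they generate. Strong stability of $I$ follows by reversing the forward argument: a strong-stability move on a generator of $I$ corresponds to a componentwise decrease in $\Hom([n], [d])$, which stays in $\cJ$ by the order-ideal property. The condition $\sqrt{I} = \fm$ then follows from $\cJ$ containing the top element, which ensures $x_i^{d'} \in I$ for each $i \in [n]$ and some $d' \le d$. The main obstacle I anticipate is not the core argument, which is combinatorially transparent, but the precise alignment with the framework of \cite{DFN}: pinning down the convention for how sequences of length $e < d$ are canonically embedded as elements of $\Hom([n], [d])$ and how the componentwise order behaves under that embedding, so that ``order ideal in $\Hom([n], [d])$'' in their sense matches exactly the closure property coming from strong stability on our side.
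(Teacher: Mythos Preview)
Your approach diverges from the paper's and has a real gap. The paper's argument is short and indirect: it passes to the dual $I^*$ (constructed in the preceding theorem), observes that $\sqrt{I}=\fm$ forces every minimal generator of $I^*$ to have degree exactly $n$, and then cites \cite[\S6.1]{FGH} to identify $\bpol(I^*)$ with a co-letterplace ideal $L([n],d;\cJ)$. Since the letterplace ideal $L(\cJ;d,[n])$ is \emph{by definition} the Alexander dual of this, and that dual is $\bpol(I)^\st$, the forward direction follows; the converse is the same citation run backwards.

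The gap in your plan is that you have attached $\cJ$ to the wrong side of the Alexander duality. In the sense of \cite{DFN,FGH}, the order ideal $\cJ\subset\Hom([n],[d])$ indexes the generators of the \emph{co}-letterplace ideal, equivalently the irreducible components of the letterplace ideal. Concretely, $\cJ$ consists of the isotone maps $\phi:[n]\to[d]$ with $\prod_{k=1}^n y_{\phi(k)}\in I^*$ (equivalently, $(x_{k,\phi(k)}:k\in[n])\in\operatorname{Ass}(\bpol(I))$). Your $\cJ$ instead encodes the monomials of $I$, i.e.\ the \emph{generators} of $\bpol(I)^\st$ itself; these are indexed by weakly increasing sequences $(\alpha_1,\dots,\alpha_e)\in[n]^e$ of variable length $e\le d$, which do not sit naturally in $\Hom([n],[d])$ (your counting function lands in $\{0,\dots,d\}$, as you already note). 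For instance, with $n=d=2$ and $I=(x_1,x_2^2)$ one has $I^*=(y_1^2,y_1y_2)$ and the correct $\cJ=\{(1,1),(1,2)\}$, whereas your encoded set of monomials of $I$ is strictly larger and contains $(0,2)$. Your step~(ii), asserting that $L(\cJ;d,[n])$ is generated by the monomials $\bpol(\sm')^\st$, is not the definition used in \cite{DFN}; verifying it would amount to computing the Alexander dual of $\bpol(I)^\st$ and recognizing it as a co-letterplace ideal---which is exactly the paper's route through $I^*$.
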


\begin{proof}
If $I \subset S$ is a strongly stable ideal with $\sqrt{I}= \fm$, then the dual $I^* \subset S'=\kk[y_1, \ldots, y_d]$ is a strongly stable ideal whose minimal generators all have degree $n$.  
As shown in \cite[\S 6.1]{FGH}, $\bpol(I^*)$ is a co-letterplace ideal $L([n],d;  \cJ)$ for some order ideal $\cJ \subset \Hom([n], [d])$. Then the Alexander dual  of $\bpol(I^*)$, which coincides with $\bpol(I)^\st$,  is the letterplace  ideal  $L(\cJ; d, [n])$ by definition. 

The second assertion follows from the fact that any co-letterplace ideal  $L([n],d; \cJ)$ is the $\bpol(-)$ of some strongly stable ideal whose generators all have degree $n$.  
\end{proof}

\begin{rem}\label{Gunnar}
In \cite{F}, Fl\o ystad generalized the notions of a (co-)letterplace ideal so that $\bpol(I)$ of any strongly 
stable ideal $I$ belongs to these classes (one of the crucial points is considering an order ideal $\cJ$  in $\Hom([n], \NN)$, not in $\Hom([n], [d])$). Through this idea, he gave the duality.
\end{rem}

For a monomial $x^\ba \in S$ with $\ba=(a_1, \ldots, a_n) \in \NN^n$, set $\nu(x^\ba) := \max \{ \, i \mid  a_i > 0 \,  \}$. It is well-known that if $I$ is strongly stable, then 
$$\pd_S (S/I) = \max \{ \, \nu(\sm) \mid \sm \in G(I) \} \quad \text{and} \quad 
\height(I) =\max\{ \, i \mid x_i \in \sqrt{I} \, \}.$$
Hence, for a strongly stable ideal $I$ with $\height(I)=c$, $S/I$ is Cohen--Macaulay if and only if $\nu(\sm) \le c$ for all $\sm \in G(I)$, if and only if $\sm \in \kk[x_1, \ldots, x_c]$ for all $\sm \in G(I)$.  Of course, $\wS/\bpol(I)$ is Cohen--Macaulay if and only if so is $S/I$. 

\begin{cor} Let $(0) \ne I \subset S$ be a Cohen--Macaulay strongly stable ideal, and set $\wI:=\bpol(I)$.  
Then $\wS/\wI$ is the Stanley-Reisner ring of a ball or a sphere. More precisely, if $n \ge 2$, then $\wS/\wI$ is the Stanley-Reisner ring of a ball. 
\end{cor}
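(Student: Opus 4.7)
The plan combines Lemma~\ref{characterization} with the Danaraj--Klee theorem that a shellable pseudo-manifold is a ball or a sphere. Write $\Delta := \Delta(\wI)$. By Lemma~\ref{characterization}, each minimal prime of $\wI$ has the form $P_s = (x_{i,\gamma_i^{\<s\>}})_{1\le i\le t_s}$ with $\gamma_1^{\<s\>} \le \cdots \le \gamma_{t_s}^{\<s\>}$; the Cohen--Macaulay hypothesis forces $t_s = c := \height I$ for every $s$ and confines the generators of $I$ to $\kk[x_1, \ldots, x_c]$. When $c < n$, each variable $x_{i,j}$ with $i > c$ is absent from every $P_s$ and hence is a cone vertex of $\Delta$, so $\Delta = \tau * \Delta_0$, where $\tau$ is the full simplex on $\{x_{i,j} : i > c\}$ and $\Delta_0$ is the Stanley--Reisner complex of $\bpol(I_0)$ for the $\fm$-primary, Cohen--Macaulay, strongly stable ideal $I_0 \subset \kk[x_1, \ldots, x_c]$ obtained from $I$. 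By induction on $n$, $\Delta_0$ is a ball or a sphere, and the join of a non-empty simplex with a ball or sphere is always a ball. This disposes of the case $c < n$ with the stronger conclusion, so we may assume $c = n$.

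For $c = n$ I would check pseudo-manifoldness and shellability of $\Delta$, then invoke Danaraj--Klee. The pseudo-manifold property is a direct calculation: a codimension-one face is $F_s \setminus \{x_{i_0,j_0}\}$, and any second facet containing it is determined uniquely (if it exists at all) by the tuple obtained from $\gamma^{\<s\>}$ by changing only its $i_0$-th coordinate to $j_0$. Shellability of the facets ordered lexicographically in their $\gamma^{\<s\>}$ can be extracted from \cite{Y12} (or equivalently from the letterplace/coletterplace framework of \cite{F, FGH}); the shelling condition reduces, via Lemma~\ref{characterization}, to tracking which adjacent $\gamma$-tuples violate weak monotonicity. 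Combined, these show $\Delta$ is a ball or a sphere.

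To rule out the sphere case when $n \ge 2$, I would exhibit a free codimension-one face. The prime $P = (x_{1,1}, x_{2,1}, \ldots, x_{n,1})$ is always a minimal prime of $\wI$: for each $i$, the generator $\bpol(x_i^{a_i}) = x_{i,1}\cdots x_{i,a_i}$ of $\wI$ (with $a_i$ the least exponent satisfying $x_i^{a_i}\in I$) forces $x_{i,1}$ into any prime of this ``column-$1$'' type, ruling out a proper subprime. If $F_s$ is the facet dual to $P$, then $F_s \setminus \{x_{1,2}\}$ would lie in a second facet only if $(2,1,1,\ldots,1)$ were the $\gamma$-tuple of some minimal prime; but that tuple fails to be weakly increasing and so is ruled out by Lemma~\ref{characterization}. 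Hence $F_s \setminus \{x_{1,2}\}$ is free, $\Delta$ cannot be a sphere, and $\Delta$ is a ball. The main obstacle is producing the shelling in a self-contained manner; if \cite{Y12} does not supply it directly, one constructs the lex shelling by hand using Lemma~\ref{characterization} to verify the shelling condition, which is the most delicate step.
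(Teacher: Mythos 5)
Your route differs from the paper's. For $\sqrt{I}=\fm$ the paper simply invokes Proposition~\ref{letterpalce} to identify $\wI$ with a letterplace ideal $L(\cJ;d,[n])$ and then cites \cite[Theorem~5.1]{DFN}, which says outright that such a quotient is the Stanley--Reisner ring of a ball unless the poset $[n]$ is an antichain (i.e.\ $n=1$). Your reduction from $c<n$ to $c=n$ via the cone over $\bpol(J)$ is the same as the paper's. What you propose instead for the core case is to run Danaraj--Klee from scratch. Two of the three pieces you need are in fact correct: the pseudo-manifold check (a ridge $F_s\setminus\{x_{i_0,j_0}\}$ can sit in at most one other facet, namely the one whose $\gamma$-tuple agrees with $\gamma^{\<s\>}$ except that coordinate $i_0$ is replaced by $j_0$, and such a facet exists only if that tuple is still weakly increasing), and the free-face argument ($(x_{1,1},\dots,x_{n,1})$ is indeed a minimal prime, since the generator $\bpol(x_i^{a_i})$ lives entirely in row $i$ and so forces $x_{i,1}$, and the would-be second facet through $F\setminus\{x_{1,2}\}$ would have tuple $(2,1,\dots,1)$, which violates weak monotonicity). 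The free-face observation is a nice, concrete alternative to DFN's antichain criterion.

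The genuine gap is shellability. You attribute it to \cite{Y12}, but \cite{Y12} proves that $\bpol(I)$ is a polarization and builds its Eliahou--Kervaire-type minimal free resolution; it contains no assertion that the Stanley--Reisner complex of $\wI$ is shellable, and Cohen--Macaulayness (which one does get from the polarization statement) is strictly weaker than shellability. If instead you pull shellability from the letterplace framework of \cite{F,FGH,DFN}, you are invoking essentially the same machinery the paper cites, and the argument is no longer an independent proof. To keep your route self-contained you would have to verify directly that lex order on the weakly increasing tuples $\gamma^{\<s\>}$ is a shelling order: for each $s$ and each $s'$ with $\gamma^{\<s'\>}$ lex-smaller, one must produce a tuple $\gamma^{\<s''\>}$, lex-smaller than $\gamma^{\<s\>}$, that actually occurs among the irreducible components, differs from $\gamma^{\<s\>}$ in exactly one coordinate, and satisfies $F_s\cap F_{s'}\subseteq F_s\cap F_{s''}$. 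This requires not just Lemma~\ref{characterization} but a closure property of the set of components of $\wI$ under the relevant coordinate moves (which ultimately comes from strong stability of $I$), and that combinatorial verification is not supplied in your sketch; it is the substantive step you would need to write out.
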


If $n=1$, then $I=(x^e)$ for some $e \le d$. Hence $\wI= (x_{1,1}x_{1,2} \cdots x_{1,e})$, and 
$\wS/\wI$ is the Stanley-Reisner ring of a sphere (resp. ball) if $e=d$ (resp. $e < d$).  

\begin{proof}
First, assume that $\sqrt{I} =\fm$. In this case, $\wI$ is a letterplace ideal $L(\cJ; d, [n])$ by Proposition~\ref{letterpalce}, and the assertion follows from \cite[Theorem~5.1]{DFN} (note that the poset $[n]$ is an antichain if and only if $n=1$). 

If  $\sqrt{I} \ne \fm$ (equivalently, $c:= \height(I) < n$),  then we have $I=JS$ for a  strongly stable ideal $J \subset \kk[x_1, \ldots, x_c]$ with $\sqrt{J} = (x_1, \ldots, x_c)$. Moreover, the simplicial complex associated with $\wI$ is the cone over the one associated with $\bpol(J)$. So the assertion can be reduced to the first case.  
\end{proof}

 For $x^\ba \in S$ with $\deg (x^\ba) \leq d$ and $l := \nu(x^\ba)$,  set 
$$\mu(x^\ba) := \Bigl(\prod_{i=1}^{l-1} x_{i,b_i+1} \Bigr) \cdot \bpol(x^\ba),$$
where $b_i := \sum_{j=1}^i a_j$ for each $i$ as before. 
In \cite{OY}, R. Okazaki and the second author constructed a minimal $\wS$-free resolution $\wP_\bullet$ of $\bpol(I)$ of a strongly stable ideal $I$. If $S/I$ is a Cohen-Macaulay ring of codimension $c$, 
the ``last" term $\wP_c$ of the minimal free resolution is isomorphic to 
$$\bigoplus_{\substack{\sm \in G(I) \\ \nu(\sm) =c}} \wS(-\deg(\mu(\sm))).$$

We also set 
$$\wX:=\prod_{\substack{ 1 \le i \le n \\ 1\le j \le d} }x_{i,j}$$
and 
$$\omega(\sm) := \wX /\mu(\sm)$$
for $\sm \in G(I)$. 

\begin{cor}\label{canonical}
Let $(0) \ne I \subset S$ be a Cohen--Macaulay strongly stable ideal with $\height(I)=c$, and set $\wI:=\bpol(I)$. 
Then the canonical module $\omega_{\wS/\wI}$  is isomorphic to the ideal of $\wS/\wI$ generated by (the image of) $\{ \, \omega(\sm) \mid \sm \in G(I), \nu(\sm)=c \, \}.$
\end{cor}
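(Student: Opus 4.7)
The plan is to compute $\omega_{\wS/\wI}$ via the minimal $\wS$-free resolution of $\wS/\wI$ and match the result with the ideal in the statement.

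Set $N:=nd$, so $\omega_{\wS}\cong \wS(-N)$ and, since $\wS/\wI$ is Cohen--Macaulay of codimension $c$, $\omega_{\wS/\wI}\cong\Ext^{c}_{\wS}(\wS/\wI,\wS)(-N)$. Let $F_\bullet$ denote the minimal $\wS$-free resolution of $\wS/\wI$. The cited theorem of Okazaki--Yanagawa gives
$$F_c=\bigoplus_{\sm\in\mathcal{M}}\wS\bigl(-\deg\mu(\sm)\bigr),\qquad \mathcal{M}:=\{\sm\in G(I):\nu(\sm)=c\}.$$
Applying $\Hom_{\wS}(-,\wS)(-N)$ to the tail of $F_\bullet$ and using the identity $\deg\omega(\sm)=N-\deg\mu(\sm)$ presents $\omega_{\wS/\wI}$ as the cokernel of
$$\Hom_{\wS}(F_{c-1},\wS)(-N)\xrightarrow{\partial^{*}}\bigoplus_{\sm\in\mathcal{M}}\wS\bigl(-\deg\omega(\sm)\bigr).$$
So $\omega_{\wS/\wI}$ is generated by the images of the dual basis vectors $e^{*}_{\sm}$ ($\sm\in\mathcal{M}$), each sitting in degree $\deg\omega(\sm)$.

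I would next introduce the $\wS$-linear map
$$\phi\colon \bigoplus_{\sm\in\mathcal{M}}\wS\bigl(-\deg\omega(\sm)\bigr)\longrightarrow \wS/\wI,\qquad e^{*}_{\sm}\longmapsto \omega(\sm)+\wI,$$
whose image is precisely the ideal $J$ of the statement. The crux is to show that $\phi$ annihilates $\operatorname{Im}(\partial^{*})$, so that $\phi$ descends to a surjection $\bar\phi\colon\omega_{\wS/\wI}\twoheadrightarrow J$. A Hilbert-series comparison would then force $\bar\phi$ to be an isomorphism: both modules are Cohen--Macaulay of dimension $N-c$; the series of $\omega_{\wS/\wI}$ is read off from the dualised resolution, and that of $J$ is controlled by the preceding corollary, which identifies $\wS/\wI$ as the Stanley--Reisner ring of a Cohen--Macaulay ball (or sphere when $n=1$, where the statement degenerates to the fact that $\wS/\wI$ is Gorenstein and $\omega(x^e)=\wX/\bpol(x^e)$ is the expected unique generator).

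The vanishing $\phi\circ\partial^{*}=0$ is the main obstacle. Concretely, for each basis vector $e^{*}_{\sn}$ of $F^{*}_{c-1}$ with $\partial^{*}(e^{*}_{\sn})=\sum_{\sm}c_{\sn,\sm}\,e^{*}_{\sm}$ coming from the OY differential, I must verify $\sum_{\sm}c_{\sn,\sm}\,\omega(\sm)\in\wI$. My plan is to unpack the combinatorial shape of the OY differentials: each coefficient $c_{\sn,\sm}$ is a monomial supported on the positions separating $\mu(\sn)$ from $\mu(\sm)$, so that $c_{\sn,\sm}\cdot\omega(\sm)=\wX\cdot c_{\sn,\sm}/\mu(\sm)$ reintroduces enough variables of $\wX$ to become divisible by $\bpol(\sm'')$ for a suitable neighbour $\sm''\in G(I)$. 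Thus in fact each individual summand of $\sum_{\sm}c_{\sn,\sm}\,\omega(\sm)$ already lies in $\wI$, which suffices.
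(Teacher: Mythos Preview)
Your setup through the presentation of $\omega_{\wS/\wI}$ as a quotient of $\bigoplus_{\sm}\wS(-\deg\omega(\sm))$ (the sum over $\sm\in G(I)$ with $\nu(\sm)=c$) is correct, and is exactly how the paper begins. The divergence --- and the gap --- is in how you finish.

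Your Hilbert-series comparison in step (4) is circular. The preceding corollary tells you that $\wS/\wI$ is the Stanley--Reisner ring of a ball or sphere, hence that $\omega_{\wS/\wI}$ is isomorphic to \emph{some} multigraded ideal of $\wS/\wI$, and one can indeed read its Hilbert series off the dualized resolution. But this says nothing about the Hilbert series of \emph{your} ideal $J$, which you defined independently as the image of $\phi$. You appear to be importing the Hilbert series of the canonical ideal and assigning it to $J$, but the equality of $J$ with that canonical ideal is precisely what must be proved. Your assertion that $J$ is Cohen--Macaulay is likewise unjustified. Step (3) is also only a plan: checking that each term $c_{\sn,\sm}\,\omega(\sm)$ already lies in $\wI$ requires a concrete analysis of the Okazaki--Yanagawa differentials that you have not carried out.

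The paper bypasses all of this with a one-line multigraded argument. Once one knows that $\omega_{\wS/\wI}$ is a multigraded ideal of $\wS/\wI$ (from the ball/sphere corollary) and that its generators sit in the squarefree multidegrees $\deg\omega(\sm)$, one observes that in any squarefree multidegree the graded piece of $\wS/\wI$ is at most one-dimensional, spanned by the unique squarefree monomial of that degree --- here $\omega(\sm)$ itself. Hence whatever multigraded ideal $\omega_{\wS/\wI}$ is, it is forced to be generated by the $\omega(\sm)$. No explicit map $\phi$, no verification of $\phi\circ\partial^{*}=0$, and no Hilbert-series matching is needed. If you want to salvage your approach, the cleanest fix is to replace your steps (3) and (4) by exactly this multidegree observation.
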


\begin{proof}
Since $\wS/\wI$ is the Stanley-Reisner ring of a ball or a sphere, the  canonical module $\omega_{\wS/\wI}$ is isomorphic to a multigraded  ideal of  $\wS/\wI$. 
Since $\omega_{\wS/\wI} =\Ext_{\wS}^c(\wS/\wI, \omega_{\wS})$ and $\omega_{\wS}$ is isomorphic to the principal ideal $(\wX)$ of $\wS$, $\omega_{\wS/\wI}$ is 
a quotient of 
$$\Hom_{\wS}(\wP_c, \omega_{\wS}) \cong \bigoplus_{\substack{\sm \in G(I) \\ \nu(\sm) =c}} \wS(-\deg(\omega(\sm))).$$
So we are done. 
\end{proof}

For a Cohen--Macaulay strongly stable ideal $I$, the canonical module $\omega_{S/I}$ of $S/I$ itself is isomorphic to $\omega_{\wS/\wI} \otimes_{\wS} \wS/(\Theta)$ and $\Theta$ forms a $(\omega_{\wS/\wI})$-regular sequence, where $
\Theta = \{x_{i,1}-x_{i,j} \, |\, 1\leq i \leq n,\, 2\leq j \leq d\,\}$. 
However, $\omega_{S/I}$ is  not isomorphic to an ideal of $S/I$ in general. 
 
We also remark that \cite[Corollary~4.3]{DFN} gives a description of the canonical module of the quotient ring of a letterplace ideal, and it also works in the case of  Corollary~\ref{canonical}. 
However, our description  is much  simpler in this case.

\section{The Hilbert series of $H_{\fm}^i(S/I)$}
Let $R=\kk[x_1, \ldots, x_m]$ be a polynomial ring. 
For a $\ZZ$-graded $R$-module $M$,  $H(M,\lambda)$ denotes the Hilbert series $\sum_{i \in \ZZ} (\dim_\kk M_i) \lambda^i$ of $M$.  
Let  $\omega_R$  denote the graded canonical module $R(-m)$ of $R$.  

The  following must be a fundamental formula on the Alexander duality of Stanley-Reisner ring theory, but we cannot find any reference.    

\begin{lem}\label{ext-betti}
Let $R=\kk[x_1, \ldots, x_m]$ be a polynomial ring, and $I \subset R$ a squarefree monomial ideal. 
Then  we have 
$$
H(\Ext_R^{m-i}(R/I, \omega_R), \lambda) = \displaystyle \sum_{j\ge 0} \frac{\beta_{i-j,m-j}(I^\vee)\lambda^j}{(1-\lambda)^j}. 
$$
Here $I^\vee \subset R$ is the Alexander dual of $I$, and 
$\beta_{p,q}(I^\vee)$ is the graded Betti number of $I^\vee$, that is, the dimension of $[\Tor_p^{R}(I^\vee, \kk)]_q$. 
\end{lem}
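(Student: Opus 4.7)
The approach is to transport the question to local cohomology via graded local duality and then match both sides to combinatorial data on $\Delta$ via Hochster's formulas, linked by combinatorial Alexander duality. First I would apply graded local duality for $R$ with canonical module $\omega_R = R(-m)$; since $[\Ext_R^{m-i}(R/I,\omega_R)]_j$ is the graded Matlis dual of $[H_\fm^i(R/I)]_{-j}$, this yields
$$H(\Ext_R^{m-i}(R/I,\omega_R),\lambda) \;=\; H(H_\fm^i(R/I),\lambda^{-1}),$$
so the problem reduces to computing the Hilbert series of the local cohomology on the right.

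Next, writing $\Delta$ for the Stanley--Reisner complex of $I$ (so $I = I_\Delta$) and $\Delta^\vee$ for its combinatorial Alexander dual (so $I^\vee = I_{\Delta^\vee}$), Hochster's formula for the local cohomology of $R/I_\Delta$ gives, after collecting multigraded contributions from $\ZZ^m$,
$$H(H_\fm^i(R/I),\lambda^{-1}) \;=\; \sum_{F \in \Delta} \dim_\kk \tilde H^{i-|F|-1}(\operatorname{link}_\Delta F;\kk)\cdot\left(\frac{\lambda}{1-\lambda}\right)^{|F|},$$
since a face $F$ contributes over the multidegrees $a \in \ZZ^m$ whose negative support equals $F$. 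Grouping faces by cardinality $j=|F|$ extracts the factor $\lambda^j/(1-\lambda)^j$, so the task reduces to the combinatorial identity
$$\beta_{i-j,m-j}(I^\vee) \;=\; \sum_{\substack{F \in \Delta \\ |F| = j}} \dim_\kk \tilde H^{i-j-1}(\operatorname{link}_\Delta F;\kk).$$

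For this last step I would decompose the $\ZZ$-graded Betti number into multigraded pieces, $\beta_{i-j,m-j}(I^\vee) = \sum_{|\tau|=m-j}\beta_{i-j,\tau}(I^\vee)$, and apply Hochster's formula to $I^\vee = I_{\Delta^\vee}$ to obtain $\beta_{i-j,\tau}(I^\vee) = \dim_\kk \tilde H_{|\tau|-(i-j)-2}(\Delta^\vee|_\tau;\kk)$. The combinatorial identity $(\operatorname{link}_\Delta F)^\vee = \Delta^\vee|_{[m]\setminus F}$, together with combinatorial Alexander duality $\tilde H_{n-k-3}(K^\vee;\kk)\cong \tilde H^k(K;\kk)$ on $n$ vertices, then converts each summand into $\dim_\kk \tilde H^{i-j-1}(\operatorname{link}_\Delta F;\kk)$ with $F = [m]\setminus\tau$; the contributions from $F \notin \Delta$ vanish because in that case $\tau \in \Delta^\vee$, making $\Delta^\vee|_\tau$ a full simplex and hence acyclic. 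I expect the main obstacle to be the careful index bookkeeping: verifying the sign conventions in graded local duality to produce $\lambda^{-1}$ rather than some twist, and correctly aligning the homological degree of the Betti side with the cohomological degree of the local cohomology side through the Alexander duality shift.
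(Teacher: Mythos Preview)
Your argument is correct and complete: graded local duality, Hochster's formula for $H_\fm^i(\kk[\Delta])$, Hochster's formula for the Betti numbers of $I_{\Delta^\vee}$, the identity $(\operatorname{link}_\Delta F)^\vee = \Delta^\vee|_{[m]\setminus F}$, and combinatorial Alexander duality fit together exactly as you describe, and the index bookkeeping you flagged works out (with $|\tau|=m-j$ the relevant homological degree is $m-i-2$ on both sides).

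The paper takes a different route. Rather than passing through local cohomology and simplicial topology, it works directly with the $\NN^m$-graded structure of $\Ext_R^{m-i}(R/I,\omega_R)$: this module is \emph{squarefree} in the sense of \cite{Y00}, so its graded piece in degree $\ba\in\NN^m$ depends only on the support $\bba$, and there is a direct isomorphism $[\Ext_R^{m-i}(R/I,\omega_R)]_{\bba}\cong[\Tor^R_{i-|\bba|}(I^\vee,\kk)]_{\one-\bba}$. The Hilbert series is then obtained by a binomial count of the $\ba$ with given support size. In effect, the paper uses a packaged algebraic form of the Ext--Tor duality, while you unpack it into the two Hochster formulas linked by topological Alexander duality. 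Your approach is more elementary in that it relies only on classical Stanley--Reisner tools; the paper's approach is shorter once one accepts the squarefree-module machinery, and avoids any mention of reduced (co)homology.
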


\begin{proof}
For $\ba =(a_1, \ldots, a_m)\in \NN^{m}$, the vector  $\bba =(\overline{a}_1, \ldots, \overline{a}_m)\in \NN^{m}$ is defined by 
$$
\overline{a}_i =
\begin{cases}
 1 & \text{ if $a_i \ge 1$,}  \\
 0 & \text{if $a_i=0$.}
 \end{cases}
$$

By \cite[Theorem 2.6]{Y00}, $\Ext_R^i (R/I, \omega_R)$ is a squarefree module. Hence we have $[\Ext_R^i (R/I, \omega_R)]_\ba= 0$ for all $\ba \in \ZZ^m \setminus   \NN^m$, and  
$$
[\Ext_R^i (R/I, \omega_R)]_\ba  \cong  [\Ext_R^i (R/I, \omega_R)]_{\bba} 
$$
for all $\ba \in \NN^m$.  Furthermore, it is well-known (cf., \cite[Theorem~3.4]{Y00}) that  
$$
[\Ext_R^i (R/I, \omega_R)]_{\bba} 
\cong  
[\Tor_{m- |\bba | -i}^R(\wI^{\vee},\kk)]_{\one-\bba}.   
$$
Here we set $\one :=(1, \ldots, 1 ) \in \NN^m$, and $|\bb| := \sum_{i=1}^m b_i$ for $\bb =(b_1, \ldots, b_m)\in \NN^{m}$. 
It is also well-known that  $[\Tor_{i}^R(\wI^{\vee},\kk)]_\ba \ne 0$ for $\ba \in \ZZ^m$ implies $\ba$ is a 0-1 vector.   

So we have
$$\dim_{\kk} [\Ext_R^{m-i}(R/I, \omega_R)]_0=  \beta_{i,m}(I^\vee)$$
and 
\begin{eqnarray*}
\dim_{\kk} [\Ext_R^{m-i}(R/I, \omega_R)]_l 
&=&  \displaystyle \sum_{j=1}^{l} \displaystyle \sum_{\substack{\ba \in \NN^m \\ |\ba|=l, |\bba|=j}} \dim_{\kk} [\Ext_R^{m-i}(R/I, \omega_R)]_\ba \\
&=&  \displaystyle \sum_{j=1}^{l} \displaystyle \sum_{\substack{\ba \in\NN^m \\ \ba =\bba, |\ba|=j}} \binom{l-1}{l-j}\dim_{\kk} [\Ext_R^{m-i}(R/I, \omega_R)]_{\ba} \\
&=& \displaystyle \sum_{j=1}^{l} \displaystyle \sum_{\substack{\ba \in\NN^m \\ \ba =\bba, |\ba|=j}} \binom{l-1}{l-j}\dim_{\kk} [\Tor_{i-j}^R (I^{\vee},\kk)]_{\one - \ba} \\
&=& \displaystyle \sum_{j=1}^{l} \binom{l-1}{l-j} \beta_{i-j,m-j}(I^\vee)\\
\end{eqnarray*}
for $l > 0$.  So the assertion follows from the following computation  
\begin{eqnarray*}
\sum_{j\ge 0} \frac{\beta_{i-j,m-j}(I^\vee) \lambda^j}{(1-\lambda)^j} 
&=&\beta_{i,m}(I^\vee)+\sum_{j \ge 1} \left\{  \beta_{i-j, m-j}(I^\vee) \lambda^j \cdot \sum_{p\ge0} \binom{j+p-1}{p}\lambda^p \right\} \\
&=& \beta_{i, m}(I^\vee)+ \sum_{l\ge1} \left\{ \sum_{j=1}^{l} \binom{l-1}{l-j}\beta_{i-j, m-j}(I^\vee)       \right\}  \lambda^l \\
&=& \dim_{\kk} [\Ext_R^{m-i}(R/I, \omega_R)]_0 + \sum_{l\ge1} \dim_{\kk} [\Ext_R^{m-i}(R/I, \omega_R)]_l   \cdot \lambda^l, 
\end{eqnarray*}
where $l:=j+p$. 
\end{proof}

\begin{cor}\label{extbpol}
For  a strongly stable ideal   $I\subset S$ with $\wI := \bpol (I)$, we have 
$$
H(\Ext_{\wS}^{nd-i}(\wS/\wI, \omega_{\wS}), \lambda) = \displaystyle \sum_{j\ge 0} \frac{\beta_{i-j,nd-j}(I^*)\lambda^j}{(1-\lambda)^j}. 
$$
\end{cor}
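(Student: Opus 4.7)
The plan is to obtain the corollary as a straightforward specialization of Lemma~\ref{ext-betti} combined with the two identities that give the dual: the squarefree Alexander duality $(\wI^\vee)^\st = \bpol(I^*)$ and the fact that both $(-)^\st$ and $\bpol(-)$ preserve graded Betti numbers.

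First, I would apply Lemma~\ref{ext-betti} to the ambient polynomial ring $R = \wS$ (which has $m = nd$ variables) and to the squarefree monomial ideal $\wI = \bpol(I) \subset \wS$. This yields immediately
$$
H(\Ext_{\wS}^{nd-i}(\wS/\wI, \omega_{\wS}), \lambda) = \sum_{j\ge 0} \frac{\beta_{i-j,\,nd-j}^{\wS}(\wI^\vee)\, \lambda^j}{(1-\lambda)^j}.
$$
So the entire content of the corollary reduces to the Betti-number identity
$$
\beta_{p,q}^{\wS}(\wI^\vee) = \beta_{p,q}^{S'}(I^*) \qquad \text{for all } p, q,
$$
where $S' = \kk[y_1,\ldots,y_d]$.

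Next I would verify this identity in two elementary steps. Since $(-)^\st \colon \wS \to \wS'$ is an isomorphism of standard-graded polynomial rings sending $x_{i,j}$ to $y_{j,i}$, it induces an isomorphism of graded minimal free resolutions between $\wI^\vee$ over $\wS$ and $(\wI^\vee)^\st$ over $\wS'$, so $\beta_{p,q}^{\wS}(\wI^\vee) = \beta_{p,q}^{\wS'}((\wI^\vee)^\st)$. By the construction of $I^*$, we have $(\wI^\vee)^\st = \bpol(I^*)$. Finally, because $\bpol(I^*)$ is a polarization of the strongly stable ideal $I^* \subset S'$, the defining property of the alternative polarization recalled in the introduction (namely $\beta_{i,j}^{\wS}(\bpol(I)) = \beta_{i,j}^S(I)$) gives $\beta_{p,q}^{\wS'}(\bpol(I^*)) = \beta_{p,q}^{S'}(I^*)$. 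Chaining the three equalities and substituting back into the formula from Lemma~\ref{ext-betti} yields the desired expression.

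There is essentially no obstacle here; the statement is a formal corollary. The only thing to be mildly careful about is to invoke the right version of ``$\bpol$ preserves Betti numbers'' (applied to $I^*$ and not to $I$) and to notice that $\wS$ and $\wS'$ both have $nd$ variables, which is why the degree index $nd-j$ is consistent on both sides and why $\Ext^{nd-i}$ is the correct cohomological degree corresponding to the homological degree $i$ in the Betti numbers of $I^*$.
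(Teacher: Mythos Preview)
Your proof is correct and follows essentially the same approach as the paper: apply Lemma~\ref{ext-betti} with $R=\wS$ and $m=nd$, then use the equality $\beta_{p,q}(\wI^\vee)=\beta_{p,q}(I^*)$. The paper states this equality without further comment, whereas you spell out the chain $\beta_{p,q}^{\wS}(\wI^\vee)=\beta_{p,q}^{\wS'}((\wI^\vee)^\st)=\beta_{p,q}^{\wS'}(\bpol(I^*))=\beta_{p,q}^{S'}(I^*)$, which is exactly the justification the paper leaves implicit.
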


\begin{proof}
The assertion follows from Lemma~\ref{ext-betti} (applying to $\wI \subset \wS$) and 
the equality $\beta_{p,q}(\wI^\vee) = \beta_{p,q}(I^*)$. 
\end{proof}

\begin{thm}\label{localcoh}
Let $I\subset S$ be a strongly stable ideal. Then the Hilbert series of the local cohomology module $H_\fm^i(S/I)$ can be described as follows.
$$
H(H_{\fm}^i (S/I),  \lambda^{-1})=\displaystyle \sum_{j\in \ZZ}\frac{\beta_{i-j,n-j}(I^{*}) \lambda^j}{(1-\lambda)^j}. 
$$
\end{thm}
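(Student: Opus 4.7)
The plan is to combine Corollary~\ref{extbpol} with local duality on $S$ and a term-by-term Hilbert-series identity that transfers Ext over $\wS$ to Ext over $S$ via the polarization.

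First I would apply local duality over $S$ to rewrite
\[
H(H_\fm^i(S/I),\lambda^{-1}) = H(\Ext_S^{n-i}(S/I,\omega_S),\lambda).
\]
Setting $\ell := n(d-1)$, the heart of the argument is the identity
\[
H(\Ext_S^k(S/I,\omega_S),\lambda) \;=\; \left(\frac{1-\lambda}{\lambda}\right)^\ell H(\Ext_\wS^k(\wS/\wI,\omega_\wS),\lambda),
\]
which I would prove as follows. Since $\wI = \bpol(I)$ is a polarization, the minimal $\wS$-free resolution $\wP_\bullet$ of $\wS/\wI$ yields, after $-\otimes_\wS S$, the minimal $S$-free resolution of $S/I$. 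With $C^\bullet := \Hom_\wS(\wP_\bullet,\wS)$ and $E^p := H^p(C^\bullet) = \Ext_\wS^p(\wS/\wI,\wS)$, hom-tensor adjunction identifies $C^\bullet \otimes_\wS S$ with the complex computing $\Ext_S^\bullet(S/I,\omega_S)$ up to a degree shift of $-n$. A bicomplex argument based on resolving $S$ over $\wS$ by the Koszul complex on $\Theta$, combined with the $\Theta$-regularity of each $E^p$, yields the displayed identity.

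Next I would apply Corollary~\ref{extbpol} with $i$ replaced by $\ell + i$, using $nd - (\ell+i) = n-i$, to obtain
\[
H(\Ext_\wS^{n-i}(\wS/\wI,\omega_\wS),\lambda) = \sum_{j \ge 0} \frac{\beta_{\ell+i-j,\,nd-j}(I^*)\,\lambda^j}{(1-\lambda)^j}.
\]
Multiplying through by $((1-\lambda)/\lambda)^\ell$ and reindexing $j' := j - \ell$ (so $nd - j = n - j'$) converts this to
\[
\sum_{j' \ge -\ell} \frac{\beta_{i-j',\,n-j'}(I^*)\,\lambda^{j'}}{(1-\lambda)^{j'}}.
\]
Since $\pd_{S'}(I^*) \le d - 1$, $\beta_{p,q}(I^*) = 0$ for $p \ge d$; and $j' < -\ell$ forces $i - j' \ge \ell + 1 \ge d$ (using $n \ge 1$), so those Betti numbers all vanish and the sum extends freely to $j' \in \ZZ$, producing the formula of Theorem~\ref{localcoh}.

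The hard part will be establishing the $\Theta$-regularity of each $E^p$ needed for the displayed identity. When $S/I$ is Cohen--Macaulay it is automatic, because only $E^{\height(I)} \cong \omega_{\wS/\wI}$ is nonzero and $\omega_{\wS/\wI}$ is Cohen--Macaulay of the same dimension as $\wS/\wI$. In the general case I would try to exploit the squarefreeness of each $E^p$ in the sense of \cite[Theorem~2.6]{Y00}, together with the special form of the elements of $\Theta$ as differences of variables, possibly via a sequentially-Cohen--Macaulay filtration of $\wS/\wI$ coming from the alternative polarization.
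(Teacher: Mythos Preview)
Your approach is essentially identical to the paper's: local duality, the Hilbert-series transfer from $\wS$ to $S$ via the factor $((1-\lambda)/\lambda)^{n(d-1)}$, then Corollary~\ref{extbpol} and the same reindexing $j\mapsto j-n(d-1)$. The one place you leave open---the $\Theta$-regularity of each $\Ext_{\wS}^p(\wS/\wI,\wS)$---is exactly what the paper invokes as ``the full statement of \cite[Theorem~3.4]{Y12}'', so no new argument is needed; once that citation is supplied your proof is complete and matches the paper's line for line.
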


\begin{proof}
Set $\Theta := \{\, x_{i,1}-x_{i,j} \, | \, 1\leq i \leq n, \, 2\leq j \leq d \, \}$. By the full statement of  \cite[Theorem~3.4]{Y12}, if $\Ext_{\wS}^i (\wS/\wI, \wS) \ne 0$, then $\Theta$ forms an
$\Ext_{\wS}^i (\wS/\wI, \wS)$-regular sequence.  Hence we have
$$
[\wS/(\Theta)\otimes_{\wS} \Ext_{\wS}^{n-i} (\wS/\wI, \omega_{\wS})](nd-n) \cong \Ext_{S}^{n-i}(S/I, \omega_{S})
$$
and 
\begin{eqnarray*}
H(\Ext_{S}^{n-i}(S/I,\omega_S), \lambda)
&=& \lambda^{n-nd} \cdot H(\wS/(\Theta)\otimes_{\wS} \Ext_{\wS}^{n-i} (\wS/\wI, \omega_{\wS}), \lambda) \\
&=& \lambda^{n-nd}(1-\lambda)^{nd-n} \cdot H(\Ext_{\wS}^{n-i}(\wS/\wI , \omega_{\wS}),\lambda) \\
&=& \lambda^{n-nd}(1-\lambda)^{nd-n} \displaystyle \sum_{j\ge 0} \frac{\beta_{nd-n+i-j,nd-j}(I^*) \lambda^j}{(1-\lambda)^j}, 
\end{eqnarray*}
where the last equality follows from Corollary~\ref{extbpol}. 
Replacing $j$ by $nd-n+j$, we have 
\begin{eqnarray*}
H(H_{\fm}^i (S/I), \lambda^{-1})&=&H(\Ext_{S}^{n-i}(S/I, \omega_{S}),\lambda)\\ 
&=&  \lambda^{n-nd}(1-\lambda)^{nd-n} \displaystyle \sum_{j\ge n-nd} \frac{\beta_{i-j, n-j}(I^*)\lambda^{nd-n+j}}{(1-\lambda)^{nd-n+j}} \\
&=&  \sum _{j\ge n-nd} \frac{\beta_{i-j, n-j}(I^*) \lambda^{j}}{(1-\lambda)^j}.
\end{eqnarray*}
Here the first equality follows from the fact that $H_{\fm}^i (S/I)$ is the graded Matlis dual of $\Ext_{S}^{n-i}(S/I, \omega_{S})$.  
\end{proof}

\begin{cor}\label{components and lc}
Let $I\subset S$ be a strongly stable ideal. Then $S/I$ is a Cohen-Macaulay ring if and only if $I^*$ has a linear resolution.
\end{cor}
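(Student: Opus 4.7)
The plan is to read off the equivalence directly from Theorem~\ref{localcoh} via a simple linear-independence observation. Setting $u := \lambda/(1-\lambda)$, each summand $\lambda^j/(1-\lambda)^j$ equals $u^j$, and the powers $\{u^j\}_{j \in \ZZ}$ are linearly independent as Laurent polynomials in $u$. Consequently,
$$
H(H_\fm^i(S/I), \lambda^{-1}) = \sum_{j \in \ZZ} \beta_{i-j,\, n-j}(I^*) \, u^j
$$
vanishes if and only if every coefficient $\beta_{i-j,\, n-j}(I^*)$ equals zero.

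Setting $c := \height(I)$, we have $\dim S/I = n-c$, so $S/I$ is Cohen--Macaulay if and only if $H_\fm^i(S/I) = 0$ for every $i < n-c$. Via the change of variables $p := i-j$, $q := n-j$ (so that $q-p = n-i$), the previous paragraph translates this into the condition $\beta_{p,q}(I^*) = 0$ for every pair $(p,q)$ with $q-p > c$.

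Next I would verify that the minimum generator degree of $I^*$ equals $c$. This is transparent from the construction used in the existence theorem: the generators of $I^*$ are indexed by the irreducible components $P_s$ of $\wI = \bpol(I)$, and the generator coming from $P_s = (\, x_{i, \gamma_i^{\<s\>}} \mid 1 \le i \le t_s \,)$ has degree $t_s = \height(P_s)$. Since polarization preserves heights, $\min_s t_s = \height(\wI) = \height(I) = c$, and by minimality of the graded free resolution of $I^*$ this forces the automatic vanishing $\beta_{p,q}(I^*) = 0$ for every $(p,q)$ with $q-p < c$.

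Combining the two vanishings, $S/I$ is Cohen--Macaulay if and only if the Betti table of $I^*$ is concentrated on the single diagonal $q-p = c$; since $c$ is also the minimum generator degree of $I^*$, this is precisely the condition that $I^*$ admits a (necessarily $c$-)linear resolution. The only non-routine ingredient is the linear independence of the $u^j$, which is immediate; everything else is bookkeeping.
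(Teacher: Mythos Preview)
Your argument is correct and follows precisely the first of the two routes the paper indicates (``Follows from Theorem~\ref{localcoh}''), filling in the details the paper omits: the linear-independence of the $u^j=(\lambda/(1-\lambda))^j$ to read off vanishing of the Betti numbers from vanishing of the Hilbert series, and the identification of $\height(I)$ with the minimum generator degree of $I^*$. The paper's alternative citation, the Eagon--Reiner theorem applied to $\wI\subset\wS$, would give a slightly shorter path (Cohen--Macaulayness of $\wS/\wI$ is equivalent to a linear resolution for $\wI^\vee$, and both properties transfer along polarization and the isomorphism $(-)^{\st}$), but your route via Theorem~\ref{localcoh} is exactly what the authors had in mind.
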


\begin{proof}
Follows from Theorem~\ref{localcoh}, or from \cite[Theorem~3]{ER}.   
\end{proof}

\begin{cor}
Let $I$ be a strongly stable ideal.  If the  irredundant irreducible decomposition of $\bpol(I)$ is of the form  
\begin{equation}\label{irred decomp of wI}
\bpol (I) = \bigcap_{s=1}^r (x_{i,\gamma_{i}^{\<s\>}} \, |\, 1\leq i \leq t_s \, ) \subset \wS, 
\end{equation}
then we have 
$$
H(H_{\fm}^i (S/I), \lambda^{-1})=\displaystyle \frac{\sum_{j\geq 1}\# \{ s\in [r]\, |\, t_s = n-i ,\, \gamma_{t_s}^{\<s\>}=j\, \} \lambda^{i-j+1}}{(1-\lambda)^i} .
$$
\end{cor}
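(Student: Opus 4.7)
The plan is to start from Theorem~\ref{localcoh}, which rewrites $H(H_\fm^i(S/I),\lambda^{-1})$ as $\sum_j \beta_{i-j,n-j}(I^*)\lambda^j/(1-\lambda)^j$, and then expand each Betti number via the Eliahou--Kervaire formula, which applies because $I^*$ is a strongly stable ideal in $\kk[y_1,\dots,y_d]$.

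First, I would make explicit the minimal generators of $I^*$ in terms of \eqref{irred decomp of wI}. From the construction in the proof of the existence of $I^*$, we have
$$G(I^*)=\{\sm_s := \prod_{i=1}^{t_s} y_{\gamma_i^{\<s\>}} \mid 1\le s\le r\},$$
with $\deg(\sm_s)=t_s$, and because $\gamma_1^{\<s\>}\le\cdots\le\gamma_{t_s}^{\<s\>}$ by Lemma~\ref{characterization}, the index $\nu(\sm_s)=\gamma_{t_s}^{\<s\>}$. Applying the Eliahou--Kervaire formula
$$\beta_{p,p+q}(I^*)=\sum_{\substack{\sm\in G(I^*)\\ \deg\sm=q}}\binom{\nu(\sm)-1}{p},$$
with $p=i-j$ and $q=n-i$, gives
$$\beta_{i-j,n-j}(I^*)=\sum_{s:\,t_s=n-i}\binom{\gamma_{t_s}^{\<s\>}-1}{i-j}.$$

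Substituting this into the formula of Theorem~\ref{localcoh} and interchanging the order of summation reduces the task to the single-generator identity
$$\sum_{j\in\ZZ}\binom{g-1}{i-j}\frac{\lambda^j}{(1-\lambda)^j}=\frac{\lambda^{i-g+1}}{(1-\lambda)^i}\qquad (g\ge 1).$$
This follows after pulling out $\lambda^i/(1-\lambda)^i$, setting $k=i-j$, and invoking the binomial theorem:
$$\frac{\lambda^i}{(1-\lambda)^i}\sum_{k=0}^{g-1}\binom{g-1}{k}\Bigl(\frac{1-\lambda}{\lambda}\Bigr)^k=\frac{\lambda^i}{(1-\lambda)^i}\Bigl(\frac{1}{\lambda}\Bigr)^{g-1}=\frac{\lambda^{i-g+1}}{(1-\lambda)^i}.$$

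Finally, applying this identity to each $s$ with $t_s=n-i$ (taking $g=\gamma_{t_s}^{\<s\>}$) and regrouping the resulting terms according to the value $j=\gamma_{t_s}^{\<s\>}$ yields exactly the claimed formula. There is no serious obstacle; the computation is essentially a bookkeeping exercise once one matches the Eliahou--Kervaire data of $I^*$ with the combinatorial data $(t_s,\gamma_{t_s}^{\<s\>})$ of the irreducible decomposition of $\bpol(I)$. The only delicate point is confirming that the term $\nu(\sm_s)=\gamma_{t_s}^{\<s\>}$, which is guaranteed by the monotonicity built into condition $(*)$ of Lemma~\ref{characterization}.
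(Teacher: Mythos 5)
Your proposal is correct and follows essentially the same route as the paper: start from Theorem~\ref{localcoh}, feed in the Eliahou--Kervaire Betti numbers of $I^*$ (with $\nu(\sm_s)=\gamma_{t_s}^{\<s\>}$ by the monotonicity from Lemma~\ref{characterization}), and collapse the resulting sum via the binomial theorem. The only cosmetic difference is that the paper organizes the computation component-by-component via additivity, whereas you substitute first and then interchange the order of summation; the underlying calculation is identical.
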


\begin{proof}
By the additivity of the statement, it suffices to compute how an irreducible  component
$$
P_s = (x_{i,\gamma_{i}^{\<s\>}} \, |\, 1\leq i \leq t_s \, )
$$
of $\bpol(I)$ contributes to the Hilbert series $H(H_{\fm}^i (S/I), \lambda^{-1})$. 
For simplicity, set $\gamma = \gamma_{t_s}^{<s>}$ and $t=t_s$.
This component gives  
$$
\prod_{i=1}^t y_{\gamma_i^{<s>}} \in G(I^*).
$$
By the Eliahou-Kervaire formula (\cite{EK}), the contribution of $P_s$ to the Betti numbers of $I^*$ is 
\begin{align*}
\begin{cases}
 0 & \text{ if $j \neq  t$,}  \\
 \binom{\gamma -1}{i}  & \text{ if $j=t$,}
 \end{cases}
 \end{align*}
 for  $\beta_{i,i+j}(I^*)$, equivalently, 
 \begin{align*}
\begin{cases}
 0 & \text{ if $n-i \neq  t$,}  \\
 \binom{\gamma -1}{i-j}  & \text{ if $n-i=t$,}
 \end{cases}
 \end{align*}
 for $\beta_{i-j,n-j}(I^*)$. Hence, by Theorem \ref{localcoh}, $P_s$ concerns $H_{\fm}^i (S/I)$ if and only if $i=n-t$.
 Moreover, if $i=n-t$, the contribution to $H(H_{\fm}^i(S/I), \lambda^{-1})$ is the following 
 \begin{eqnarray*}
 \sum_{j=i-\gamma+1}^{i}\frac{\binom{\gamma -1}{i-j} \lambda^j}{(1-\lambda)^j} 
 &=&  \frac{\sum_{j=i-\gamma+1}^{i}(1-\lambda)^{i-j}\binom{\gamma -1}{i-j} \lambda^j}{(1-\lambda)^i}  \\
 &=&  \frac{\sum_{k=0}^{\gamma-1}(1-\lambda)^{k}\binom{\gamma -1}{k} \lambda^{i-k}}{(1-\lambda)^i}  \, \, \,   \, \, \, \, \, \, \, ({\rm here} \, \,  k=i-j)\\
  &=&  \frac{\bigl(\sum_{k=0}^{\gamma-1}(1-\lambda)^{k}\binom{\gamma -1}{k} \lambda^{\gamma-1-k}\bigr) \lambda^{i-\gamma+1}}{(1-\lambda)^i} \\
  &=& \frac{((1-\lambda)+\lambda)^{\gamma -1} \lambda^{i-\gamma +1}}{(1-\lambda)^i}\\
  &=& \frac{\lambda^{i-\gamma +1}}{(1-\lambda)^i}  .
 \end{eqnarray*}
 So the proof is completed.
\end{proof}

\begin{ex}
For the strongly stable ideal $I$ in Example~\ref{main example},  $\bpol(I)$ has two height 3 irreducible components $P_2 =  (x_{1,1},  x_{2,2},  x_{3,2})$ and $P_3= (x_{1,2},  x_{2,2},  x_{3,2} )$. Clearly, $\gamma_3^{\<2\>} = \gamma_3^{\<3\>} = 2$ in the above notation. Hence we have $H(H_\fm^0(S/I), \lambda^{-1}) =2\lambda^{-2+1} = 2 \lambda^{-1}$ by Corollary~\ref{components and lc}. 
\end{ex}

In Section 5, we will give a procedure constructing the irreducible decomposition  of $\bpol(I)$ from that of $I$ itself. After this, we will return to the Hilbert series of $H_{\fm}^i(S/I)$. See Corollary~\ref{components and lc2} below.

\section{Relation to squarefree strongly stable ideals}
We say an ideal $I \subset S$ is {\it squarefree strongly stable}, if it is a squarefree monomial ideal and the condition that $\sm \in G(I),\ x_i  \, | \, \sm$, $j<i$ and $x_j \! \not|  \, \sm$ imply $(x_j /x_i) \cdot \sm \in I$ is satisfied.
For our study on (squarefree) strongly stable ideals,  the dimension of the ambient ring $S=\kk[x_1, \ldots, x_n]$ 
is not important.  So we consider the following equivalence relation. For monomial ideals $I \subset S_{(n)} :=\kk[x_1, \ldots, x_n]$ and $J \subset S_{(m)} :=\kk[x_1, \ldots, x_m]$, the relation $I \equiv J$ holds if the following condition is satisfied. 
\begin{itemize}
\item Without loss of generality, we may assume that $n \le m$. Then regarding $S_{(n)}$ as a subring of $S_{(m)}$ in the natural way, we have $G(I)=G(J)$.    
\end{itemize}

For a monomial $\sm \in S$ of the form \eqref{another expression},  set 
$$\sm^\sigma := \prod_{i=1}^e x_{\alpha_i+i-1} \in T,$$
where $T=\kk[x_1, \ldots, x_N]$ is a polynomial ring with $N \gg 0$.     
Aramova et al. \cite{AHH} showed that if $I \subset S$ is a strongly stable ideal then 
$$I^\sigma := (\, \sm^\sigma \mid \sm \in G(I) \, ) \subset T$$ 
is squarefree strongly stable. Conversely, any squarefree strongly stable ideal is of the form $I^\sigma$ for some strongly stable ideal $I$. 

Let $I \subset S$ be a strongly stable ideal, and $\wI:= \bpol(I) \subset \wS$ its alternative polarization. 
For $$\Theta_1:=\{ \, x_{i,j} -x_{i+1, j-1} \mid 1 \le i <n,   1< j \le d \, \},$$ we have an isomorphism $\wS/(\Theta_1) \cong T=\kk[x_1, \ldots, x_N]$ with $N=n+d-1$ induced by $\wS \ni x_{i,j} \longmapsto x_{i+j-1} \in T$. 
As shown in \cite[\S4]{Y12}, we have 
\begin{itemize}
\item[(1)] Through the isomorphism $\wS /(\Theta_1) \cong T$, we have $\wS /(\Theta_1) \otimes_{\wS} \wS /\wI \cong T/I^\sigma$. 
\item[(2)] $\Theta_1$ forms a $\wS /\wI$-regular sequence.
\end{itemize}

\begin{thm}\label{I^sigma}
Let $I$ be a strongly stable ideal. If the irredundant irreducible decomposition of $\bpol(I)$ is of the form 
\eqref{irred decomp of wI}, then we have 
$$I^\sigma =  \bigcap_{s=1}^r ( \, x_{\gamma^{\<s\>}_i +i-1} \mid 1 \le i \le t_s \, ) \subset T.$$
\end{thm}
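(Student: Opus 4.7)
The plan is to prove the equality by establishing both inclusions of ideals in $T$, using the surjection $\pi: \wS \twoheadrightarrow T$ given by $x_{i,j} \mapsto x_{i+j-1}$ (so $\ker \pi = (\Theta_1)$ and $\pi(\wI) = I^\sigma$). First observe that each $\pi(P_s) = (x_{\gamma_i^{\<s\>}+i-1} : 1 \le i \le t_s)$ is a monomial prime of $T$: by Lemma~\ref{characterization}, $\gamma_1^{\<s\>} \le \cdots \le \gamma_{t_s}^{\<s\>}$, so the integers $\gamma_i^{\<s\>}+i-1$ are strictly increasing in $i$, hence distinct. The inclusion $I^\sigma \subseteq \bigcap_s \pi(P_s)$ is immediate on generators: for $\sm \in G(I)$, $\bpol(\sm) \in \wI \subseteq P_s$ forces $x_{i,\gamma_i^{\<s\>}} \mid \bpol(\sm)$ for some $1 \le i \le t_s$, and applying $\pi$ yields $x_{\gamma_i^{\<s\>}+i-1} \mid \sm^\sigma$.

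For the reverse inclusion, I plan to use the correspondence between minimal primes of a squarefree monomial ideal and minimal vertex covers of its support hypergraph. Let $H(\wI) = \{\operatorname{supp}(\bpol(\sm))\}_{\sm \in G(I)}$ on vertex set $[n]\times[d]$, and $H(I^\sigma) = \{\operatorname{supp}(\sm^\sigma)\}_{\sm \in G(I)}$ on $[n+d-1]$; abusing notation, write $\pi$ also for the vertex map $(i,j) \mapsto i+j-1$. By hypothesis the minimal vertex covers of $H(\wI)$ are exactly the sets $V_s := \{(i,\gamma_i^{\<s\>}) : 1 \le i \le t_s\}$. The key input is that $\pi$ restricts to a bijection on each individual edge $\operatorname{supp}(\bpol(\sm)) \to \operatorname{supp}(\sm^\sigma)$---this is exactly what makes each $\sm^\sigma$ squarefree. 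Given any minimal vertex cover $W$ of $H(I^\sigma)$, its preimage $\pi^{-1}(W) \subseteq [n]\times[d]$ is a vertex cover of $H(\wI)$, so it contains some $V_s$; then $\pi(V_s) \subseteq W$, and $\pi(V_s)$ is itself a vertex cover of $H(I^\sigma)$ by the same edge-wise bijectivity, so $W = \pi(V_s)$ by minimality. Hence every minimal prime of $I^\sigma$ equals $\pi(P_s)$ for some $s$, and $I^\sigma$ (the intersection of its minimal primes) contains $\bigcap_s \pi(P_s)$.

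I expect the main obstacle to be the reverse inclusion: a homological approach, tensoring the short exact sequence $0 \to \wS/\wI \to \bigoplus_s \wS/P_s \to C \to 0$ with $\wS/(\Theta_1)$ and invoking the regularity of $\Theta_1$, runs into circularity---the required vanishing of $\Tor_1^{\wS}(C, \wS/(\Theta_1))$ is essentially equivalent to the injectivity of $T/I^\sigma \to \bigoplus_s T/\pi(P_s)$ one is trying to establish. The vertex-cover argument sidesteps this by exploiting that $\pi$ is globally non-injective on $[n]\times[d]$ yet injective on each individual edge of $H(\wI)$.
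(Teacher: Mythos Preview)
Your proof is correct and takes a genuinely different route from the paper's. The paper argues homologically: since $\wS/\wI$ is sequentially Cohen--Macaulay, the number of height-$c$ associated primes of $\wI$ equals $\deg\Ext_{\wS}^c(\wS/\wI,\wS)$, and likewise for $I^\sigma$; because $\Theta_1$ is regular on each $\Ext_{\wS}^c(\wS/\wI,\wS)$, these degrees coincide, so the associated primes of $T/I^\sigma$ are forced to be exactly the images $\pi(P_s)$. Your argument is purely combinatorial: you use the Alexander duality between minimal primes and minimal vertex covers, together with the observation that $\pi$ is injective on each individual edge $\operatorname{supp}(\bpol(\sm))$, to show directly that every minimal cover of $H(I^\sigma)$ is the image of some $V_s$. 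Your approach is more elementary and self-contained---it avoids sequential Cohen--Macaulayness and the regularity of $\Theta_1$ on Ext modules entirely---while the paper's approach has the advantage that the degree identity $\deg\Ext_{\wS}^c(\wS/\wI,\wS)=\deg\Ext_T^c(T/I^\sigma,T)$ is itself a reusable tool (it reappears, in the analogous form for $\Theta$, in the proof of Corollary~\ref{adeg}). Your remark that the na\"ive tensoring argument is circular is well taken; the paper circumvents exactly that obstacle via the Ext-degree count rather than the short exact sequence you describe.
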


\begin{proof}
As above, set $\wI :=\bpol(I)$. 
Since both $\wS/\wI$ and $T/I^\sigma$ are reduced, and 
$$\wS/ ( \, x_{i, \gamma^{\<s\>}_i} \mid 1 \le i \le t_s \, ) \otimes_{\wS} \wS/(\Theta_1) \cong T/ ( \, x_{\gamma^{\<s\>}_i +i-1} \mid 1 \le i \le t_s \, ),$$ it suffices to show that all associated primes of $T/I^\sigma \, (\cong  \wS/\wI  \otimes_{\wS} \wS/(\Theta_1) )$ come from those of $\wS/\wI$. 

As shown in \cite[Theorem~3.2]{Y12}, $\wS/\wI$ is sequentially Cohen-Macaulay, that is, if $\Ext_{\wS}^c(\wS/\wI, \omega_{\wS})\ne 0$ then it is a Cohen-Macaulay module of codimension $c$. From \cite[pp.349--351]{V}, we see that  
$$(\text{the number of height $c$ associated primes of $\wI$}) =\deg  (\Ext_{\wS}^c(\wS/\wI, \wS))$$
and 
$$(\text{the number of height $c$ associated primes of $I^\sigma$}) =\deg  (\Ext_T^c(T/I^\sigma, T)).$$

By the same argument as the proof of \cite[Theorem 3.4]{Y12}, we can show that $\Theta_1$ forms an $\Ext_{\wS}^c(\wS/\wI, \wS)$-regular sequence (see also \cite[Proposition~4.1]{Y12}).  
Hence
$$\wS/ ( \Theta_1 ) \otimes_{\wS} \Ext_{\wS}^c(\wS/\wI, S)  \cong \Ext_T^c(T/I^\sigma, T),$$
and we have 
$$\deg  (\Ext_{\wS}^c(\wS/\wI, \wS)) = \deg  (\Ext_T^c(T/I^\sigma, T)).$$
So we are done. 
\end{proof}

\begin{cor}\label{I^sigma dual}
If $I$ is a strongly stable  ideal, we have $$(I^\sigma)^\vee \equiv (I^*)^\sigma,$$ where $\equiv$ is the relation defined above. 
\end{cor}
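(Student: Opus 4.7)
The plan is to chain together Theorem~\ref{I^sigma} (which describes $I^\sigma$ via an irreducible decomposition of $\bpol(I)$) with the explicit description of $G(I^*)$ obtained in the proof of the duality theorem. Both $(I^\sigma)^\vee$ and $(I^*)^\sigma$ admit a direct combinatorial formula in terms of the data $\{(t_s, \gamma_i^{\<s\>})\}$ from an irredundant irreducible decomposition of $\bpol(I)$, and these formulas will turn out to coincide.

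First, I would take the irredundant irreducible decomposition
$$\bpol(I) = \bigcap_{s=1}^r (x_{i,\gamma_i^{\<s\>}} \mid 1 \le i \le t_s),$$
so by Lemma~\ref{characterization} we have $1 \le \gamma_1^{\<s\>} \le \cdots \le \gamma_{t_s}^{\<s\>}$. Theorem~\ref{I^sigma} then gives
$$I^\sigma = \bigcap_{s=1}^r ( x_{\gamma_i^{\<s\>}+i-1} \mid 1 \le i \le t_s) \subset T.$$
Because the exponents $\gamma_i^{\<s\>}+i-1$ are strictly increasing in $i$, each component is an honest height-$t_s$ prime, so taking Alexander dual yields
$$(I^\sigma)^\vee = \Bigl(\, \prod_{i=1}^{t_s} x_{\gamma_i^{\<s\>}+i-1} \,\Big|\, 1 \le s \le r\, \Bigr) \subset T.$$
Next, from the proof of the duality theorem, the minimal generators of $I^*$ are exactly $\prod_{i=1}^{t_s} y_{\gamma_i^{\<s\>}}$ for $1 \le s \le r$. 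Applying the definition of $(-)^\sigma$ to each such generator (using the already noted inequalities on $\gamma_i^{\<s\>}$) gives
$$\Bigl(\,\prod_{i=1}^{t_s} y_{\gamma_i^{\<s\>}}\Bigr)^\sigma = \prod_{i=1}^{t_s} y_{\gamma_i^{\<s\>}+i-1},$$
so $(I^*)^\sigma$ is generated by exactly the same monomials as $(I^\sigma)^\vee$ up to renaming $y_j$ as $x_j$. This establishes $(I^\sigma)^\vee \equiv (I^*)^\sigma$.

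No serious obstacle is expected; the only point one must handle carefully is that the $r$ candidate generators of $I^*$ coming from the duality construction are genuinely the minimal ones (so that applying $(-)^\sigma$ to them legitimately yields $(I^*)^\sigma$), but this is automatic from the irredundancy of the decomposition of $\bpol(I)$ combined with the bijection $G(I^*) \leftrightarrow G(\bpol(I^*))$ built into the alternative polarization.
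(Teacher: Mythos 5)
Your proposal is correct and takes essentially the same route as the paper: both start from the irredundant irreducible decomposition of $\bpol(I)$, use Theorem~\ref{I^sigma} to read off $I^\sigma$ and hence $(I^\sigma)^\vee$, read off $G(I^*)$ from the proof of the duality theorem, apply $(-)^\sigma$, and observe that the two generating sets coincide. The paper's proof is just a terse statement of this verification; you have simply spelled out the same chain in more detail, including the point (which the paper leaves implicit) that the monotonicity of the $\gamma_i^{\<s\>}$ makes the shifted exponents strictly increasing so the $(-)^\sigma$ computation and the Alexander dual both behave as claimed.
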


\begin{proof}
If the irredundant irreducible decomposition of $\bpol(I)$ is given as in \eqref{irred decomp of wI}, then both $(I^\sigma)^\vee$ and $ (I^*)^\sigma$ are equal to 
$$\Bigl( \, \prod_{i=1}^{t_s} x_{\gamma^{\<s\>}_i +i-1} \mid 1 \le s \le r \, \Bigr).$$
More precisely, $(I^*)^\sigma$ should be  an ideal with variables  $y_1, y_2\ldots$, but this is not essential. 
\end{proof}

The Alexander duals of squarefree strongly stable ideals already appeared in an earlier  paper \cite{HT} (of course, they knew that these are  squarefree strongly stable again).  
However, the algebraic relation between $I$ and $I^\sigma$ is not clear, if one does not know $\bpol(I)$. 

\begin{ex}
Consider the strongly stable ideal $I = (x_{1}^2, x_{1}x_{2}, x_{1}x_{3}, x_{2}^2 , x_{2}x_{3})$  of Example~\ref{main example}. Then 
$$I^\sigma = (\, x_1x_2 , \, x_1x_3 , \, x_1x_4, \, x_2x_3, \, x_2x_4\, ) \\
              = (\, x_1,\, x_2\, )\cap  (\, x_1,\, x_3,\, x_4\, ) \cap (\, x_2,\, x_3,\, x_4\, )$$
and hence $(I^\sigma)^\vee  = ( x_1x_2,  x_1x_3x_4,  x_2x_3x_4 ).$

On the other hand, since $I^* = (y_{1}^2,  y_{1}y_{2}^2 ,   y_{2}^3  )$, we have $(I^*)^\sigma=( y_1y_2, y_1y_3y_4, y_2y_3y_4)$. 
\end{ex}

\section{The irreducible components of $I$ and $\bpol(I)$} 
\begin{defn}
For $\ba = (a_1,\ldots ,a_t)\in (\Z+)^t$, set 
$$\Psi(\ba) := 
\left\{ (b_1, \ldots ,b_{t-1},c) \in (\Z+)^t \left|
 \begin{array}{l}
 b_i = \Bigl ( \displaystyle \sum_{j=1}^i a_j \Bigr) -i+1 \ \ \text{for} \ \  i < t, \\
 b_{t-1} \le c \le b_{t-1} + a_t-1  
 \end{array}
 \right.\right\}.
$$
Here, if $t=1$, then we set $1 \le c \le a_1$. 
\end{defn}

\begin{rem}
In the above situation, we have $|\Psi(\ba)| = a_t$. Moreover, 
for $\bb = (b_1,,\ldots ,b_{t-1},c)\in \Psi(\ba)$, we have $1 \le b_1 \le \cdots \le  b_{t-1} \le c$. 
\end{rem}

\begin{ex}
If $\ba=(3,2,1,2)$, then $\Psi(\ba)= \{ \, (3,4,4,4), (3,4,4,5) \, \}$. 
\end{ex}

For $\ba =(a_1, \ldots, a_t) \in (\Z+)^t$ with $t \le n$, set $\fm^\ba := (x_1^{a_1}, \ldots, x_t^{a_t}) \subset S$. 
If $(0) \ne I \subset S$ is a strongly stable ideal, then an irreducible component of  $I$ is of the form $\fm^\ba$ for some $\ba \in (\Z+)^t$. 
Hence there is some 
$$E \subset  \Z+ \cup  (\Z+)^2 \cup \cdots \cup  (\Z+)^n$$
such that 
\begin{equation}\label{irred decomp}
I = \bigcap_{\ba \in E} \fm^\ba
\end{equation}
is the irredundant irreducible decomposition.  

For $\bb = (b_1,\ldots ,b_t)\in \Psi(\ba)$, we set 
$$\wfm^\bb:= (x_{1,b_1}, x_{2, b_2}, \ldots, x_{t, b_t}) \subset \wS.$$

\begin{thm}\label{irred main}
Let $I$ be  a strongly stable ideal whose  irredundant irreducible decomposition is given by \eqref{irred decomp}. 
Set $\Psi(E):= \bigcup_{\ba \in E} \Psi(\ba)$. Then 
\begin{equation}\label{irred main eq}
\bpol(I) = \bigcap_{\bb \in \Psi(E)} \wfm^\bb
\end{equation}
is the irredundant irreducible decomposition.  
\end{thm}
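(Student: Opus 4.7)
The plan is to identify the minimal primes of the squarefree monomial ideal $\bpol(I)$ with precisely the set $\{\wfm^\bb : \bb \in \Psi(E)\}$. Since a squarefree monomial ideal equals the intersection of its minimal primes, this identification gives both the decomposition $\bpol(I) = \bigcap_{\bb \in \Psi(E)} \wfm^\bb$ and, together with pairwise incomparability of the $\wfm^\bb$'s, its irredundancy.

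First I show $\bpol(I) \subseteq \wfm^\bb$ for each $\ba \in E$ and $\bb \in \Psi(\ba)$; it suffices to verify $\bpol(\sm) \in \wfm^\bb$ for every $\sm = x^\bc \in G(I)$. Setting $d_i := \sum_{k \le i} c_k$, this reduces to finding $i \in [t]$ with $d_{i-1} < b_i \le d_i$. I track $\phi(i) := b_i - d_i$: the goal is $\phi(i) \in (-c_i, 0]$ for some $i$. Each increment $\phi(i) - \phi(i-1)$ is bounded below by $-c_i$ (using $a_i \ge 1$ for $1 \le i < t$ and $c \ge b_{t-1}$ for $i = t$), so $\phi$ cannot jump from $\{\phi > 0\}$ directly into $\{\phi \le -c_i\}$; hence either $\phi(1) = a_1 - c_1 \le 0$ (giving the conclusion at $i = 1$), or $\phi(i) > 0$ for every $1 \le i \le t-1$. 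In the latter case, the requirement that $\phi(t) \in (-c_t, 0]$ for every admissible $c \in [b_{t-1},\,b_{t-1}+a_t-1]$ is equivalent to $\sum_{i \le t}(a_i - c_i) \le t-1$. If this inequality were to fail, then combined with the positivity hypothesis one gets $\sum_{i \le k}(a_i - c_i) \ge k$ for all $k \le t$, and a greedy construction (assign $c'_i := a_i - 1$ from $i=1$ upward until the mass budget is exhausted, then place any remainder on indices $> t$) produces $\bc' \in \NN^n$ with $c'_i \le a_i - 1$ for $i \le t$, $|\bc'| = |\bc|$, and $\sum_{i \le k} c'_i \ge \sum_{i \le k} c_i$ for all $k$. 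Strong stability of $I$ then forces $x^{\bc'} \in I$, while $c'_i \le a_i - 1$ for $i \le t$ means $x^{\bc'} \notin \fm^\ba$, contradicting $I \subseteq \fm^\ba$.

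For the converse, Lemma~\ref{characterization} tells us that every minimal prime of $\bpol(I)$ has the form $P = (x_{1, \gamma_1}, \ldots, x_{t, \gamma_t})$ with $\gamma_1 \le \cdots \le \gamma_t$. Setting $a_1 := \gamma_1$ and $a_i := \gamma_i - \gamma_{i-1} + 1$ for $2 \le i \le t-1$, I exploit the minimality of $P$ over $\bpol(I)$ to extract witnesses $\sm_i \in G(I)$ (one per index $i \in [t]$) whose polarizations meet $P$ only at $x_{i, \gamma_i}$; these force a unique value of $a_t \ge \gamma_t - \gamma_{t-1} + 1$ for which $\ba = (a_1, \ldots, a_t) \in E$ and $\bb = (\gamma_1, \ldots, \gamma_t) \in \Psi(\ba)$. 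Irredundancy is automatic: distinct elements of $\Psi(\ba)$ share their first $t-1$ coordinates and differ only in the last, yielding incomparable $\wfm^\bb$'s, while elements of $\Psi(\ba)$ and $\Psi(\ba')$ for $\ba \ne \ba'$ have distinct initial blocks. The principal technical obstacle is the greedy construction of $\bc'$ in the first step, where the simultaneous feasibility of the caps $c'_i \le a_i - 1$ and the dominance inequalities $\sum_{i \le k} c'_i \ge \sum_{i \le k} c_i$ rests precisely on the hypothesis $\sum_{i \le k}(a_i - c_i) \ge k$ for all $k \le t$ that the failure of our target inequality would supply.
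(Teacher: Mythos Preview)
Your forward inclusion $\bpol(I)\subseteq \bigcap_{\bb\in\Psi(E)}\wfm^\bb$ is correct and rather elegant: the tracking of $\phi(i)=b_i-d_i$ together with the greedy ``left-shift'' of $\bc$ to $\bc'$ (using strong stability to force $x^{\bc'}\in I$ while $x^{\bc'}\notin\fm^\ba$) is a nice direct argument that the paper does not use. The paper instead proves the whole theorem by a double induction on $n$ and $d(I)=\sum_{\sm\in G(I)}\deg\sm$, reducing $I$ to the pair $(I{:}x_n,\ \oI)$ via Lemmas~\ref{colon}--\ref{recover}, and then checking one explicit identity \eqref{ETS} between $\bpol(I)$, $\bpol(I{:}x_n)$, and the ``new'' primes $\wfm^{\wba}$, $\wfm^{\oba}$. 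So on this half your approach is genuinely different and arguably more conceptual.

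The converse, however, has a real gap. From a minimal prime $P=(x_{1,\gamma_1},\ldots,x_{t,\gamma_t})$ you correctly recover $a_1,\ldots,a_{t-1}$, and minimality does supply witnesses $\sm_i\in G(I)$ whose polarizations hit $P$ only in the $i$-th slot. But you then assert that ``these force a unique value of $a_t\ge\gamma_t-\gamma_{t-1}+1$ for which $\ba\in E$'' without any argument. Concretely, you need two things: (i) that $I\subseteq\fm^\ba$, and (ii) that $\fm^\ba$ is irredundant in the decomposition of $I$. Neither follows from the existence of the witnesses $\sm_i$ in any obvious way; in particular, knowing that $\bpol(\sm_t)$ meets $P$ only at $x_{t,\gamma_t}$ constrains the partial sums $d^{(t)}_k$ of $\sm_t$, but does not by itself pin down $a_t$ or show $\fm^{(a_1,\ldots,a_t)}\supseteq I$. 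This is the heart of the theorem, and your sketch does not address it. (Your irredundancy claim ``elements of $\Psi(\ba)$ and $\Psi(\ba')$ have distinct initial blocks'' is also not literally true---e.g.\ $\ba=(1,1)$ and $\ba'=(1,2,1)$ can both lie in $E$---though the incomparability of the $\wfm^\bb$'s does follow from the irredundancy of $E$ by a more careful case analysis.)

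If you want to keep your direct approach, one route is to argue on the other side: show that $\bigcap_{\bb\in\Psi(E)}\wfm^\bb\subseteq\bpol(I)$ by exhibiting, for each monomial not in $\bpol(I)$, some $\bb\in\Psi(E)$ with the monomial not in $\wfm^\bb$. Alternatively, the paper's inductive reduction via $I{:}x_n$ and $\oI$ handles both inclusions at once and avoids having to reconstruct $\ba\in E$ from a minimal prime.
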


It is easy to see that $\Psi(E)= \bigsqcup_{\ba \in E} \Psi(\ba)$. We will implicitly  use this fact in the arguments below. 

To prove the theorem, we need some preparation. 
Let $I$ be a strongly stable ideal whose  irredundant irreducible decomposition is given by \eqref{irred decomp}. We decompose $E$ into three parts 
$E_0 =\{ \, (a_1, \ldots, a_t) \in E \mid t < n \, \}$, $E_1 =\{ \, (a_1, \ldots, a_n ) \in E \mid a_n =1 \, \} $ and   $E_2 =\{ \, (a_1, \ldots, a_n ) \in E \mid a_n \ge 2  \, \}$. 

\begin{lem}\label{colon}
With the above notation, $I:x_n$ is a strongly stable ideal (not necessarily minimally) generated by 
$$\{ \,  \sm \in G(I) \mid \text{$x_n$ does not divide $\sm$} \, \} \cup 
\{ \, \, \sm/x_n \mid  \sm \in G(I), \text{$x_n$ divides $\sm$} \, \}.$$
Moreover, its irredundant irreducible decomposition is given by  
\begin{equation}\label{I:x_n}
I:x_n = \Bigl( \bigcap_{\ba \in E_0} \fm^\ba\Bigr) \cap \Bigl( \bigcap_{\ba \in E_2} \fm^{\ba-\be_n}\Bigr),
\end{equation}
where $\be_n$ is the $n$-th unit vector $(0,0, \ldots,1 ) \in \ZZ^n$. 
\end{lem}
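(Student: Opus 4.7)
The plan is to verify the two assertions in turn, reducing each to a direct manipulation of monomial ideals.

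For the first assertion, I would invoke the standard colon formula $I : x_n = (\, \sm/\gcd(\sm, x_n) \mid \sm \in G(I)\, )$, which immediately yields the stated (not necessarily minimal) generating set. To see that $I:x_n$ is strongly stable, take any monomial $\sm' \in I:x_n$ with $x_i \mid \sm'$ and $j < i$; then $\sm' x_n \in I$ and $x_i \mid \sm' x_n$. The defining strong stability property of $I$ on $G(I)$ extends to all monomials of $I$ (by factoring out a minimal generator dividing the monomial), so applying it to $\sm' x_n$ at position $i$ gives $(x_j/x_i)\sm' x_n \in I$, hence $(x_j/x_i)\sm' \in I:x_n$. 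The case $i=n$ is identical, applying strong stability to $\sm' x_n$ at position $n$.

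For the decomposition itself, I would write $I:x_n = \bigcap_{\ba \in E}(\fm^\ba : x_n)$ and analyze each factor. If $\ba \in E_0$, then $\fm^\ba$ does not involve $x_n$ and $\fm^\ba : x_n = \fm^\ba$. If $\ba \in E_1$, then $x_n \in \fm^\ba$, so $\fm^\ba : x_n = S$ and this factor drops out. If $\ba \in E_2$, a direct componentwise check gives $\fm^\ba : x_n = \fm^{\ba - \be_n}$, and the support of $\ba - \be_n$ is still $[n]$ because $a_n - 1 \ge 1$. Combining these yields the equality \eqref{I:x_n}.

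The remaining task, which I expect to be the main (albeit still routine) obstacle, is proving irredundancy. Recall that $\fm^\ba \supseteq \fm^\bb$ if and only if $\mathrm{supp}(\ba) \subseteq \mathrm{supp}(\bb)$ and $a_i \le b_i$ for all $i \in \mathrm{supp}(\ba)$. The exponent vectors appearing in \eqref{I:x_n} split into those from $E_0$, whose supports are proper subsets of $[n]$, and those of the form $\bc - \be_n$ with $\bc \in E_2$, whose supports equal $[n]$. For any two distinct factors, a containment relation between them would lift, via the characterization above, to a containment $\fm^{\ba'} \supseteq \fm^{\bb'}$ with distinct $\ba', \bb' \in E$, contradicting the irredundancy of \eqref{irred decomp}. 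The only subtlety is that no $\fm^{\bc - \be_n}$ (with $\bc \in E_2$) can contain an $\fm^\ba$ (with $\ba \in E_0$), but this is automatic since the support of $\bc - \be_n$ equals $[n]$ while the support of $\ba$ is a proper subset.
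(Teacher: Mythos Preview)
Your proof is correct and follows essentially the same route as the paper's, only with more detail: the paper dismisses the generating set and strong stability as ``clear,'' carries out the identical case-by-case computation of $\fm^\ba : x_n$, and then asserts irredundancy in one sentence. Your explicit verification of irredundancy via the support/containment criterion is a harmless expansion of what the paper leaves implicit.
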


\begin{proof}
The first and second assertions are clear. 
To see the last assertion, note that 
$$I:x_n = \Bigl ( \bigcap_{\ba \in E} \fm^\ba \Bigr ) :x_n = \bigcap_{\ba \in E} (\fm^\ba :x_n), $$
and
$$
\fm^\ba :x_n = \begin{cases}
\fm^\ba & \text{if $\ba \in E_0$,} \\
S & \text{if $\ba \in E_1$,} \\
\fm^{\ba -\be_n} & \text{if $\ba \in E_2$.}
\end{cases}
$$
So \eqref{I:x_n} holds.  Since there is no inclusion among $\fm^\ba$ for $\ba \in E_0$ and $\fm^{\ba-\be_n}$ for $\ba \in E_2$, the decomposition  \eqref{I:x_n} is irredundant. 
\end{proof}

Set 
$$\oI:= ( \, \sm \in G(I) \mid \text{$x_n$ does not divide $\sm$} \, ).$$
For $\ba =(a_1, \ldots, a_t) \in E$, set 
\begin{equation}\label{varphi}
\varphi(\ba) = \begin{cases}
\ba & \text{if $t <n$,} \\
(a_1, \ldots, a_{n-1})  & \text{if $t =n$.} 
\end{cases}
\end{equation}

\begin{lem}\label{oI}
With the above notation, we have the following. 
\begin{itemize}
\item[(1)] $\oI$ is a strongly stable ideal, and 
$$\oI = \bigcap_{\ba \in E} \fm^{\varphi(\ba)}$$
is a (possibly redundant)   irreducible decomposition.
\item[(2)] For $\ba \in E_1$, $\fm^{\varphi(\ba)}$ is an  irreducible component of $\oI$. 
\end{itemize}
\end{lem}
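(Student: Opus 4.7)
The plan is to verify that $\oI$ is strongly stable, establish the decomposition identity in (1) by double inclusion, and then deduce (2) by transferring irredundancy from the original decomposition of $I$.

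Strong stability of $\oI$ follows at once from the identification $G(\oI) = \{\sm \in G(I) : x_n \nmid \sm\}$: if such an $\sm$ is divisible by $x_i$ with $j < i$, then necessarily $i < n$, so $(x_j/x_i)\sm$ remains $x_n$-free and lies in $\oI$ by the strong stability of $I$. For the inclusion $\oI \subseteq \bigcap_{\ba \in E}\fm^{\varphi(\ba)}$, I take $\sm \in G(\oI) \subset I$: if $t(\ba)<n$ then $\fm^{\varphi(\ba)} = \fm^\ba \ni \sm$, and if $t(\ba)=n$, the membership $\sm \in \fm^\ba$ provides some $i$ with $x_i^{a_i}\mid \sm$, while $x_n \nmid \sm$ forces $i<n$, so $\sm \in \fm^{\varphi(\ba)}$. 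For the reverse inclusion I write an arbitrary monomial $\sm$ of the right-hand side as $\sm = x_n^k \sm'$ with $x_n \nmid \sm'$; because no $\fm^{\varphi(\ba)}$ involves $x_n$, the monomial $\sm'$ also lies in every $\fm^{\varphi(\ba)}$, and the evident inclusions $\fm^{\varphi(\ba)} \subseteq \fm^\ba$ give $\sm' \in I$. The condition $x_n \nmid \sm'$ forces any minimal generator of $I$ dividing $\sm'$ to be $x_n$-free, hence to lie in $G(\oI)$, yielding $\sm = x_n^k \sm' \in \oI$.

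For part (2), fix $\ba \in E_1$, so $a_n=1$ and $\fm^\ba = (x_1^{a_1},\ldots,x_{n-1}^{a_{n-1}},x_n)$. Irredundancy of $I = \bigcap_{\bc \in E} \fm^\bc$ produces a monomial $\sn \in \bigcap_{\bc \neq \ba} \fm^\bc$ with $\sn \notin \fm^\ba$. This last condition yields both $x_n \nmid \sn$ and $x_i^{a_i}\nmid \sn$ for every $i<n$, whence $\sn \notin \fm^{\varphi(\ba)}$. Reusing the forward-inclusion argument on $\sn$ (using $x_n \nmid \sn$) shows $\sn \in \fm^{\varphi(\bc)}$ for every $\bc \neq \ba$. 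Hence $\fm^{\varphi(\ba)}$ cannot be omitted from the decomposition $\bigcap_{\bc \in E} \fm^{\varphi(\bc)} = \oI$, so it is an irreducible component of $\oI$.

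The main subtlety I anticipate is the reverse inclusion in (1): a monomial $\sm$ on the right-hand side need not itself lie in $I$, since its exponent of $x_n$ is unconstrained by any $\fm^{\varphi(\ba)}$. One must first factor out the $x_n$-part and only then invoke that the minimal generators of $\oI$ are precisely the $x_n$-free minimal generators of $I$.
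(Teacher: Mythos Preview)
Your proof is correct and follows essentially the same approach as the paper. The paper dismisses (1) with ``Easy'' while you spell out the double inclusion, and for (2) the paper argues by contradiction (if $\fm^{\varphi(\ba)}$ were not a component then some $\fm^{\varphi(\ba')}\subset\fm^{\varphi(\ba)}$, which forces $\fm^{\ba'}\subset\fm^\ba$ since $a_n=1$) whereas you exhibit an explicit witness $\sn$; both arguments ultimately transfer irredundancy from the decomposition of $I$, and both tacitly use the standard fact that a non-omittable term in a monomial irreducible decomposition is an irreducible component.
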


\begin{proof}
(1): Easy. 

(2) For a contradiction, assume that $\fm^{\varphi(\ba)}$ for $\ba \in E_1$ is not an irreducible component.  Then there is some $\ba' \in E \setminus \{ \ba \}$ such that $\fm^{\varphi(\ba')} \subset \fm^{\varphi(\ba)}$. Since $\ba \in E_1$, we have $\fm^{\ba'} \subset \fm^{\ba}$, and this is a contradiction. 
\end{proof}

Next we will study how to recover  a strongly stable ideal $I$ from $I:x_n$ and $\oI$. 
Let 
\begin{equation}\label{decompositions}
I:x_n = \bigcap_{\ba \in F} \fm^\ba \qquad \text{and} \qquad \oI  = \bigcap_{\ba \in G} \fm^\ba 
\end{equation}
be the irredundant irreducible decompositions. 
Decompose $F$ into 
$$F_0 =\{ \, (a_1, \ldots, a_t ) \in F \mid t < n \, \} \qquad  \text{and}  \qquad F_1:= (F \setminus F_0) \subset  (\Z+)^n,$$ 
and set $\varphi(F) :=\{ \, \varphi(\ba) \mid \ba \in F \, \}$, where $\varphi$ is the function defined in \eqref{varphi}. By  Lemmas~\ref{colon} and \ref{oI}, if $\ba \in G \setminus \varphi(F)$, then $\ba$ is of the from $(a_1, \ldots, a_{n-1})$ and $\fm^{\ba \oplus \be_n}$ is an irreducible component of $I$, where we set $\ba \oplus \be_n := (a_1, \ldots, a_{n-1},1)$. 

\begin{lem}\label{recover} 
With the above notation, we have the irredundant irreducible decomposition
$$I= \Bigl(\bigcap_{\ba \in F_0} \fm^\ba\Bigr) \cap \Bigl(\bigcap_{\ba \in F_1} \fm^{\ba+\be_n}\Bigr) \cap 
\Bigl(\bigcap_{\ba \in G \setminus \varphi(F)} \fm^{\ba \oplus \be_n}\Bigr).$$
\end{lem}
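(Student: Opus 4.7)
The plan is to prove Lemma~\ref{recover} by showing that each of the three chunks in the displayed decomposition matches the corresponding piece of the partition $E = E_0 \sqcup E_1 \sqcup E_2$ used in Lemma~\ref{colon}. First I would apply Lemma~\ref{colon} to conclude $F_0 = E_0$ and $F_1 = \{\, \ba - \be_n \mid \ba \in E_2 \,\}$, which immediately yield
$$\bigcap_{\ba \in F_0} \fm^\ba = \bigcap_{\ba \in E_0} \fm^\ba  \qquad \text{and} \qquad \bigcap_{\ba \in F_1} \fm^{\ba + \be_n} = \bigcap_{\ba \in E_2} \fm^\ba.$$
Since $I = \bigcap_{\ba \in E} \fm^\ba$ is irredundant by hypothesis, it then suffices to prove the set-theoretic identity $G \setminus \varphi(F) = \varphi(E_1)$: combined with the obvious bijection $E_1 \ni \ba \mapsto \varphi(\ba) \in \varphi(E_1)$, whose inverse is $\ba \mapsto \ba \oplus \be_n$, this recovers the third chunk $\bigcap_{\ba \in E_1} \fm^\ba = \bigcap_{\ba \in G \setminus \varphi(F)} \fm^{\ba \oplus \be_n}$.

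Next I would compute $\varphi(F)$ explicitly. On $F_0 = E_0$ the map $\varphi$ is the identity, while for $\bc \in E_2$ we have $\varphi(\bc - \be_n) = \varphi(\bc)$ because $\bc - \be_n$ still has length $n$ (since $c_n \ge 2$). Hence $\varphi(F) = E_0 \cup \varphi(E_2)$. For the inclusion $\varphi(E_1) \subset G \setminus \varphi(F)$, Lemma~\ref{oI}(2) already gives $\varphi(E_1) \subset G$, so it remains to rule out $\varphi(E_1) \cap \varphi(F) \ne \emptyset$. If some $\ba = (a_1, \ldots, a_{n-1}, 1) \in E_1$ satisfied $\varphi(\ba) \in E_0$, then $\fm^{\varphi(\ba)} = (x_1^{a_1}, \ldots, x_{n-1}^{a_{n-1}}) \subsetneq (x_1^{a_1}, \ldots, x_{n-1}^{a_{n-1}}, x_n) = \fm^\ba$ would force $\fm^\ba$ to be redundant in $\bigcap_{\bc \in E} \fm^\bc$, contradicting irredundancy. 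If instead $\varphi(\ba) \in \varphi(E_2)$, a corresponding $\bc = (a_1, \ldots, a_{n-1}, c_n) \in E_2$ with $c_n \ge 2$ satisfies $\fm^\bc \subsetneq \fm^\ba$ by the same type of comparison, again contradicting irredundancy.

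For the reverse inclusion, Lemma~\ref{oI}(1) gives $G \subset \varphi(E) = E_0 \cup \varphi(E_1) \cup \varphi(E_2)$, so removing $\varphi(F) = E_0 \cup \varphi(E_2)$ leaves $G \setminus \varphi(F) \subset \varphi(E_1)$. Finally, the irredundancy of the three-chunk decomposition is automatic, as it is merely a rearrangement of the already irredundant decomposition $I = \bigcap_{\ba \in E} \fm^\ba$ via the bijections above. The main obstacle is really the careful bookkeeping between $E$, $F$, $G$ under $\varphi$ and the operations $\bc \mapsto \bc \pm \be_n$ and $\ba \mapsto \ba \oplus \be_n$; once the two combinatorial contradictions force $\varphi(E_1) \cap \varphi(F) = \emptyset$, the rest is a direct reassembly.
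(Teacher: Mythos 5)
Your proof is correct and is exactly the detailed version of the paper's one-line proof (``Easily follows from Lemmas~\ref{colon} and \ref{oI}''): you identify $F_0 = E_0$ and $F_1 = \{\ba - \be_n \mid \ba \in E_2\}$ from Lemma~\ref{colon}, and then use Lemma~\ref{oI} together with irredundancy of $E$ to pin down $G \setminus \varphi(F) = \varphi(E_1)$, after which the stated decomposition is a mere relabeling of $\bigcap_{\ba \in E} \fm^\ba$. The bookkeeping with $\varphi$ and $\pm\be_n$, $\oplus\be_n$ is carried out correctly, so nothing is missing.
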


\begin{proof}
Easily follows from Lemmas~\ref{colon} and \ref{oI}.
\end{proof}

\noindent{\it The proof of Theorem~\ref{irred main}.} 
We prove the theorem by double induction on $n$ and 
$$d(I) := \sum_{\sm \in G(I)} \deg (\sm).$$  

Let $I$ be a strongly stable ideal. We may assume that $x_n$ divides some $\sm \in G(I)$. 
In fact, if this  is not the case, we can replace $I$ by $I \cap \kk[x_1, \ldots, x_{n-1}]$, and the induction works.  
Under this assumption, both $d(I:x_n)$ and $d(\oI)$ are smaller than $d(I)$. 
By the induction hypothesis, if $I:x_n$ and $\oI$ have irreducible decompositions of the form \eqref{decompositions}, we have irreducible decompositions 
$$ \bpol(I:x_n)= \bigcap_{\bb \in \Psi(F)} \wfm^\bb  \qquad \text{and} \qquad 
\bpol(\oI)= \bigcap_{\bb \in \Psi(G)} \wfm^\bb.$$
In the sequel, for $\ba=(a_1, \ldots, a_n) \in (\Z+)^n$, consider  the vector $(b_1, \ldots, b_n)$ with $b_i = \Bigl ( \displaystyle \sum_{j=1}^i a_j \Bigr) -i+1$ for $i=1, \ldots, n$. In this case, 
\begin{equation}\label{Psi(a)}
\Psi(\ba) =\{ \,  (b_1, \ldots, b_{n-1}, c) \mid b_{n-1} \le c \le b_n \, \} 
\end{equation}
and 
$$
\Psi(\ba+\be_n) = \Psi(\ba) \cup \{  \, (b_1, \ldots,  b_{n-1}, b_n +1)\, \}.
$$
Set $\wba := (b_1, \ldots,  b_{n-1}, b_n +1)$.  

For $\ba=(a_1, \ldots, a_{n-1}) \in (\Z+)^{n-1}$, we have 
$$\Psi(\ba \oplus \be_n)= \{ \, (b_1, \ldots,  b_{n-1}, b_{n-1}) \, \},$$ 
where $b_i = \Bigl ( \displaystyle \sum_{j=1}^i a_j \Bigr) -i+1$ for $i=1, \ldots, n-1$. 
Set $\oba:=  (b_1, \ldots,  b_{n-1}, b_{n-1}) $.

By Lemma~\ref{recover}, it is enough to show  
$$
\bpol(I)  =  \Bigl(\bigcap_{\bb \in \Psi(F_0)} \wfm^\bb\Bigr) \cap \Bigl(\bigcap_{\substack{\ba \in F_1 \\ \bb \in 
\Psi(\ba + \be_n)}} \wfm^\bb \Bigr) \cap 
\Bigl(\bigcap_{\ba \in G \setminus \varphi(F)} \wfm^{\oba} \Bigr). $$
Since 
\begin{eqnarray*}
(\text{the right hand side}) &=& \Bigl(\bigcap_{\bb \in \Psi(F)} \wfm^\bb\Bigr) \cap \Bigl(\bigcap_{\ba \in F_1} \wfm^{\wba} \Bigr ) \cap  \Bigl(\bigcap_{\ba \in G \setminus \varphi(F)} \wfm^{\oba} \Bigr)  \\
&=& \bpol(I:x_n) \cap \Bigl(\bigcap_{\ba \in F_1} \wfm^{\wba} \Bigr )   \cap  \Bigl(\bigcap_{\ba \in G \setminus \varphi(F)} \wfm^{\oba} \Bigr),
\end{eqnarray*}
it suffices to show that 
\begin{equation}\label{ETS}
\bpol(I)  =\bpol(I:x_n) \cap \Bigl(\bigcap_{\ba \in F_1} \wfm^{\wba} \Bigr )   \cap  \Bigl(\bigcap_{\ba \in G \setminus \varphi(F)} \wfm^{\oba} \Bigr). 
\end{equation}
First, we will prove the inclusion $\subset$ of \eqref{ETS}. Since  $\bpol(I)  \subset \bpol(I:x_n) $, it suffices to show that 
\begin{equation}\label{ETS2}
\bpol(\sm) \in \Bigl(\bigcap_{\ba \in F_1} \wfm^{\wba} \Bigr )   \cap  \Bigl(\bigcap_{\ba \in G \setminus \varphi(F)} \wfm^{\oba} \Bigr)
\end{equation}
for all $\sm  \in G(I)$. 

Take an arbitrary $\ba \in F_1$, and set $\wba= (b_1, \ldots,  b_{n-1}, b_n +1)$ as above. Since $\bpol(\sm) \in \bpol(I:x_n)$, we have $\bpol(\sm) \in \wfm^\bb$ for all $\bb \in \Psi(\ba)$.
Recall the description \eqref{Psi(a)} of $\Psi(\ba)$.  
If $x_n$ does not divide $\sm$, there exists some $1 \le i \le n-1$ such that $x_{i, b_i} \, | \, \bpol(\sm)$. Hence $\bpol(\sm) \in \wfm^{\wba}$. If $x_n$  divides $\sm$, then it can be possible that $x_{i, b_i}$ does not divide  $\bpol(\sm)$ for any $1 \le i \le n-1$. 
Note that  $\sm/x_n \in I:x_n$ and $\bpol(\sm/x_n ) \in \wfm^\bb$ for all $\bb \in \Psi(\ba)$. Hence, we have $x_{n,b_n} \, | \, \bpol(\sm/x_n )$ in this case.  It implies that $x_{n, b_n+1} \, | \,  \bpol(\sm)$, and hence $\bpol(\sm) \in \wfm^{\wba}$. 

Next, take an arbitrary $\ba \in G \setminus \varphi(F)$, and set $\oba= (b_1, \ldots,  b_{n-1}, b_{n-1})$ as above. Set $e:= \deg_{x_n} (\sm)$, where $\deg_{x_i}(-)$ stands for the degree with respect to the variable $x_i$. Then $\sn:= \sm \cdot (x_{n-1}/x_n)^e \in \oI$, and hence $\bpol(\sn) \in \bpol(\oI) \subset \wfm^\bb$ for $\bb:= (b_1, \ldots,  b_{n-1}) \in \Psi (\ba) $. It follows that $\bpol(\sm) \in \wfm^{\oba}$. In fact, if $x_{i,b_i} \, | \, \bpol(\sn)$ for some $i <n-1$, then  $x_{i,b_i} \, | \, \bpol(\sm)$. 
If $x_{n-1, b_{n-1}} \, | \,  \bpol(\sn)$, then either  $x_{n-1, b_{n-1}}$ or $x_{n, b_{n-1}}$ divides $ \bpol(\sm)$. Now we have shown \eqref{ETS2}.

Next, we will prove the inclusion $\supset$ of \eqref{ETS}. To do this, it suffices to 
show that 
$$
\bpol(\sm) \not \in \Bigl(\bigcap_{\ba \in F_1} \wfm^{\wba} \Bigr )   \cap  \Bigl(\bigcap_{\ba \in G \setminus \varphi(F)} \wfm^{\oba} \Bigr)
$$
for $\sm \in G(I:x_n) \setminus I$. 
Since $\sm \not \in I$, there is some $\ba \in F_1$ with $\sm \not \in \fm^{\ba + \be_n}$, 
or some $\ba \in G \setminus \varphi(F)$ with $\sm \not \in \fm^{\ba \oplus \be_n}$. 
If $\sm \not \in \sm^{\ba+\be_n}$, then $x_{i,j} | \bpol(\sm)$ implies $j  \le \sum_{k=1}^i \deg_{x_k}(\sm) \le \sum_{k=1}^i (a_k-1) = \Bigl ( \sum_{k=1}^i a_k \Bigr ) -i = b_i-1$ for $i \le n-1$, and $j \le b_n$ for $i=n$. It means that $\bpol(\sm) \not \in \wfm^{\wba}$.  Similarly, $\sm \not \in \fm^{\ba \oplus \be_n}$ implies $\bpol(\sm) \not \in \wfm^{\oba}$. 
Now we have shown that \eqref{irred main eq} holds

It remains to show that there is no inclusion among ideals $\wfm^\bb$ for $\bb \in \Psi(E)$, but this is easy.
\qed

\begin{ex}\label{submain example}
Consider a strongly stable ideal $I = (x_{1}^2, x_{1}x_{2}, x_{1}x_{3}, x_{2}^2 , x_{2}x_{3}^2)$, which is a slight modification of the one  in Example~\ref{main example}. From the irreducible decomposition  $I= (x_1, x_2) \cap (x_1^2, x_2,x_3) \cap (x_1,x_2^2,x_3^2) $, let us construct the decomposition  of $\bpol(I)$.  Theorem~\ref{irred main} states that  
$(x_1, x_2)$ yields $(x_{1,1}, x_{2,1})$, $(x_1^2, x_2,x_3)$ yields $(x_{1,2}, x_{2,2},x_{3,2})$, but 
$(x_1,x_2^2,x_3^2)$ yields  $(x_{1,1},x_{2,2},x_{3,2})$ and $(x_{1,1},x_{2,2},x_{3,3})$. Now we get the irreducible decomposition 
$$\bpol(I)= (x_{1,1}, x_{2,1}) \cap (x_{1,2}, x_{2,2},x_{3,2}) \cap (x_{1,1},x_{2,2},x_{3,2}) \cap(x_{1,1},x_{2,2},x_{3,3}).$$
\end{ex}

The next result concerns the {\it arithmetic degree} $\adeg(S/I)$ of $S/I$. For the basics of this notion, consult \cite[\S1]{V}.  However, following \cite{MVY}, we use the refinement $\adeg_i(S/I)$ of $\adeg(S/I)$ for $0 \le i \le \dim S/I$, which measures the contribution of the dimension $i$ components of $I$. Hence $\adeg(S/I)=\sum_{i \ge 0} \adeg_i(S/I)$.  

\begin{cor}\label{adeg}
Let $I$ be a strongly stable ideal with the irreducible decomposition  \eqref{irred decomp}. 
For $(a_1, \ldots, a_t) \in E$ (recall that $a_t > 0$), set $t(\ba):=t$ and $e(\ba):=a_t$. 
Then we have 
$$\adeg_i(S/I)=\sum_{\substack{ \ba \in E \\ t(\ba)=n-i}} e(\ba)$$
for each $i$. Hence, 
$$\adeg(S/I)=\sum_{\ba \in E} e(\ba) $$
and 
$$\deg(S/I)=\sum_{\substack{ \ba \in E \\ t(\ba)=\operatorname{ht}(I)}} e(\ba).$$
\end{cor}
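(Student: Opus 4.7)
The plan is to reduce $\adeg_i(S/I)$ to counting the height-$(n-i)$ irreducible components of $\wI:=\bpol(I)$, which is explicitly described by Theorem~\ref{irred main}. The basic identity I would invoke, a standard consequence of graded local duality (see \cite[Chapter 1]{V}), is
\[
\adeg_j(M) = \deg \Ext^{r-j}_R(M,\omega_R)
\]
for a finitely generated graded module $M$ over a polynomial ring $R$ of dimension $r$: under local duality, an associated prime of codimension $r-j$ in $M$ is also one in $\Ext^{r-j}_R(M,\omega_R)$ with the same multiplicity, so summing these multiplicities over primes of that codimension recovers both sides.

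To transfer Ext-degrees from $\wS/\wI$ to $S/I$, I would repeat the argument already used in the proof of Theorem~\ref{localcoh}: by the full statement of \cite[Theorem~3.4]{Y12}, the linear sequence $\Theta$ is regular on every nonzero $\Ext^c_{\wS}(\wS/\wI,\omega_{\wS})$, and
\[
\Ext^c_S(S/I,\omega_S) \cong \bigl[\wS/(\Theta)\otimes_{\wS}\Ext^c_{\wS}(\wS/\wI,\omega_{\wS})\bigr](nd-n).
\]
Since $\Theta$ consists of linear forms, quotienting by it preserves the degree of a graded module (the Hilbert series is multiplied by a power of $(1-\lambda)$), so
\[
\adeg_{n-c}(S/I)=\deg\Ext^c_S(S/I,\omega_S)=\deg\Ext^c_{\wS}(\wS/\wI,\omega_{\wS})=\adeg_{nd-c}(\wS/\wI).
\]

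Finally, since $\wI$ is squarefree, every irreducible component is a minimal prime of multiplicity one, and therefore $\adeg_{nd-c}(\wS/\wI)$ is just the number of height-$c$ irreducible components of $\wI$. Theorem~\ref{irred main} enumerates these as $\wfm^{\bb}$ for $\ba\in E$ with $t(\ba)=c$ and $\bb\in\Psi(\ba)$, where $|\Psi(\ba)|=e(\ba)$; setting $i=n-c$ then gives
\[
\adeg_i(S/I)=\sum_{\substack{\ba\in E\\ t(\ba)=n-i}} e(\ba).
\]
The formula for $\adeg(S/I)$ follows by summing over $i$, and that for $\deg(S/I)=\adeg_{\dim S/I}(S/I)$ follows from $\dim S/I=n-\height(I)$. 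The only step outside the bookkeeping afforded by Theorem~\ref{irred main} is citing cleanly the duality-based identity $\adeg_j(M)=\deg\Ext^{r-j}(M,\omega_R)$; everything else is essentially a combinatorial counting argument.
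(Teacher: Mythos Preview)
Your proposal is correct and follows essentially the same approach as the paper: both reduce $\adeg_{n-c}(S/I)$ to $\deg\Ext^c$, transfer this to $\wS/\wI$ via the regular sequence $\Theta$ (the paper points to the proof of Theorem~\ref{I^sigma}, you to Theorem~\ref{localcoh}, but in either case the underlying fact is \cite[Theorem~3.4]{Y12}), and then count height-$c$ components of $\wI$ using Theorem~\ref{irred main} and $|\Psi(\ba)|=e(\ba)$.
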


\begin{proof}
Set $\wI :=\bpol(I)$. By an argument similar to the proof of Theorem~\ref{I^sigma}, we have  
\begin{eqnarray*}
\adeg_{n-c}(S/I) &=&\deg (\Ext^c_S(S/I,S)) \\
&=&\deg (\Ext^c_{\wS}(\wS/\wI,\wS))\\
&=& \text{the number of codimension  $c$ associated primes of $\wI$}.  
\end{eqnarray*}
Take $\ba \in E$ with $t(\ba)=c$. Then $\ba$ yields $e(\ba)$ irreducible components of $\wI$ of  codimension $c$.  Any codimension $c$ component of $\wI$ is given in this way,  and they are all distinct. So we are done. 
\end{proof}

\begin{cor}\label{components and lc2}
Let $I$ be a strongly stable ideal with the irreducible decomposition \eqref{irred decomp}. 
For $\ba=(a_1, \ldots, a_t) \in E$ (recall that $a_t > 0$), set $t(\ba):=t$, $w(\ba):=n- \sum_{i=1}^t a_i$, and $e(\ba):=a_t$. 
Then the Hilbert series of the local cohomology module $H_\fm^i(S/I)$ is given by 
$$ H(H_{\fm}^i (S/I), \lambda^{-1})
= \left(\sum_{\substack{\ba \in E, \\ t(\ba)=n-i}} (\lambda^{w(\ba)} + \lambda^{w(\ba)+1}+ \cdots+ \lambda^{w(\ba)+e(\ba)-1})\right)  /(1-\lambda)^i. $$
\end{cor}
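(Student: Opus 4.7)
The plan is to combine Theorem~\ref{irred main} (which converts the irreducible decomposition of $I$ into that of $\bpol(I)$) with Corollary~\ref{components and lc} (which expresses $H(H_\fm^i(S/I),\lambda^{-1})$ in terms of the irreducible decomposition of $\bpol(I)$). Both tools have been proved by this stage, so the whole argument amounts to matching up indices.

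First, I fix $\ba=(a_1,\ldots,a_t)\in E$ and unpack the contribution of $\ba$ to the right-hand side of the formula in Corollary~\ref{components and lc}. By Theorem~\ref{irred main}, $\ba$ contributes exactly $|\Psi(\ba)|=a_t=e(\ba)$ irreducible components of $\bpol(I)$, all of codimension $t=t(\ba)$, one for each $\bb=(b_1,\ldots,b_{t-1},c)\in \Psi(\ba)$. In particular, Corollary~\ref{components and lc} tells us that these components affect $H_\fm^i(S/I)$ only when $t(\ba)=n-i$, so only the $\ba\in E$ with $t(\ba)=n-i$ can contribute, matching the outer summation of the claim.

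Next, for such $\ba$, the relevant index $\gamma_{t_s}^{\<s\>}$ in Corollary~\ref{components and lc} is precisely the last coordinate $c$ of $\bb\in\Psi(\ba)$, which by definition ranges over the $e(\ba)$ consecutive integers $b_{t-1},\, b_{t-1}+1,\ldots, b_{t-1}+a_t-1$, where $b_{t-1}=\sum_{j=1}^{t-1}a_j-t+2$. By Corollary~\ref{components and lc}, each such component contributes $\lambda^{i-c+1}/(1-\lambda)^i$. A short direct computation (using $i=n-t$) shows that $c=b_{t-1}+a_t-1$ yields $i-c+1=n-\sum_{j=1}^t a_j=w(\ba)$, and $c=b_{t-1}$ yields $i-c+1=w(\ba)+e(\ba)-1$. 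Hence, as $c$ varies over its $e(\ba)$ consecutive values, $i-c+1$ also varies over $w(\ba),\, w(\ba)+1,\ldots,w(\ba)+e(\ba)-1$, and summing produces
\[
\frac{\lambda^{w(\ba)}+\lambda^{w(\ba)+1}+\cdots+\lambda^{w(\ba)+e(\ba)-1}}{(1-\lambda)^i}.
\]

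Finally, summing over all $\ba\in E$ with $t(\ba)=n-i$ gives the stated formula. Since every contribution to $H_\fm^i(S/I)$ in Corollary~\ref{components and lc} comes from some irreducible component of $\bpol(I)$, and Theorem~\ref{irred main} exhibits every such component uniquely as $\wfm^\bb$ with $\bb\in\Psi(\ba)$ for a unique $\ba\in E$, no contribution is missed or double-counted. The only ``obstacle'' is the arithmetic bookkeeping in the middle step, but this is a straightforward verification given the explicit description of $\Psi(\ba)$; there is no conceptual difficulty once the two previous results are in hand.
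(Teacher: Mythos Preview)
Your proof is correct and follows essentially the same approach as the paper: both combine Theorem~\ref{irred main} with the per-component contribution $\lambda^{i-c+1}/(1-\lambda)^i$ established in the proof of Corollary~\ref{components and lc}, and then carry out the same arithmetic to identify the range of exponents as $w(\ba),\ldots,w(\ba)+e(\ba)-1$. The only cosmetic difference is that the paper phrases the description of $\Psi(\ba)$ in terms of $|\ba|-t$ rather than $b_{t-1}$, but the computations are identical.
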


\begin{proof}
For $\ba=(a_1, \ldots, a_t) \in E$ with $|\ba|:=\sum_{i=1}^t a_i$,  $\Psi(\ba)$ is the  set 
$$\{ \, (b_1, b_2, \ldots, b_{t-1}, c) \mid |\ba| - t-a_t +2 \le c \le  |\ba| - t+1 \, \}$$
with $a_t$ elements. Here $b_i =(\sum_{j=1}^i a_j)-i+1$ for each $i$, while this value is not important now. 
By Theorem~\ref{irred main}, for $\bb \in \Psi(\ba)$, $\wfm^\bb$ is an irreducible component of $\bpol(I)$, and  any irreducible component
is given in this way. 

As we have shown in the proof of Theorem~\ref{localcoh}, 
for  $\bb= (b_1, b_2, \ldots, b_{t-1}, c)  \in \Psi(\ba)$, the component $\wfm^\bb$ contributes  to the Hilbert series of $H_\fm^i(S/I)$ if and only if $i=n-t$. If $i=n-t$, the contribution is $\lambda^{i-c+1}/(1-\lambda)^i$. Here, the numerator equals $\lambda^{n-t-c+1}$, and the exponent $n-t-c+1$ moves in the range 
\begin{eqnarray*}
n-t- ( |\ba| - t+1 )+1&  \le  \ n-t-c+1 \   \le & n-t -(|\ba| - t-a_t +2)+1 \\
w(\ba)  & \le \ n-t-c+1 \  \le & w(\ba) + e(\ba)-1. 
 \end{eqnarray*} 
Hence the contribution of $\ba \in E$ to $ H(H_{\fm}^i (S/I), \lambda^{-1})$ is 
$$
\begin{cases}
0 & \text{if $i\ne  n- t(\ba)$}, \\ 
\dfrac{\lambda^{w(\ba)} + \lambda^{w(\ba)+1}+ \cdots+ \lambda^{w(\ba)+e(\ba)-1}}{(1-\lambda)^i} & \text{if $i =  n- t(\ba)$.} 
\end{cases}
$$
So we are done. 
\end{proof}

\begin{ex}
This is a continuation of Example~\ref{submain example}. For  the strongly stable ideal $I= (x_1, x_2) \cap (x_1^2, x_2,x_3) \cap (x_1,x_2^2,x_3^2)\subset \kk[x_1, x_2, x_3]$, let $\ba$ and $\bb$ denote the exponent vectors $(2,1,1)$ and $(1,2,2)$ of the height 3 components, respectively.  
With the notation of Corollaries~\ref{adeg} and \ref{components and lc2}, we have $w(\ba)=-1, e(\ba)=1, w(\bb)=-2$ and $e(\bb)=2$. Hence  we have $\adeg_0(S/I) =e(\ba)+e(\bb)=3$.  Similarly,  $H(H_{\fm}^0 (S/I),\lambda^{-1}) = \lambda^{-2} +2 \lambda^{-1}$, where the contributions of the components $(x_1^2, x_2,x_3)$ and $(x_1,x_2^2,x_3^2)$ are $\lambda^{-1}$ and $\lambda^{-2}+\lambda^{-1}$,  respectively. 
\end{ex}

\section{Remarks on irreducible components of strongly stable ideals}
For  $\ba = (a_1, \ldots, a_t)\in \NN^t$  with $a_t>0$ and $\bb = (b_1, \ldots, b_t) \in (\ZZ_{>0})^t$, we set 
$$\hba:=(a_1+1, a_2+1, \ldots, a_{t-1}+1, a_t) \in (\ZZ_{>0})^t, $$ and  
$$\wbb:=(b_1-1, b_2-1, \ldots, b_{t-1}-1, b_t) \in \NN^t.$$
Note that this notation is different from that in the previous section. 

For a monomial $x^\ba \in S$ with $\ba=(a_1, \ldots, a_n) \in \NN^n$, recall that  $\nu(x^\ba) = \max \{ \, i \mid  a_i > 0 \,  \}$. For a monomial ideal $I$, set 
$\nu(I) := \max \{\, \nu(x^\ba) \mid x^\ba \in G(I) \, \}$. If $I$ is strongly stable, then it is well-known that 
\begin{eqnarray*}
\nu(I) &=& \max \{\, \operatorname{ht}(\fm^\ba) \mid \text{$\fm^\ba$ is an irreducible component of $I$ } \}\\
&=& \max \{\, l \mid \text{$\fm^\ba$ is an irreducible component of $I$ for some $\ba \in (\ZZ_{>0})^l$ } \}. 
 \end{eqnarray*}

\begin{prop}\label{gene}
Let $I\subset S$ be a strongly stable ideal with $l:=\nu(I)$. For $x^\ba\in G(I)$ with $\nu(x^\ba)=l$, $\fm^{\hba}$ is an irreducible component of $I$. Conversely, any irreducible component of height $l$ arises in this way.  More precisely, if $\fm^\bb$ is an irreducible component of $I$ with $\mathrm{ht}(\fm^\bb)=l$ (in other words, $(\bb\in \ZZ_{>0})^l$), then $x^{\wbb} \in G(I)$.
\end{prop}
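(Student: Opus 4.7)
The plan is to treat both directions through the classical ``corner monomial'' characterization of irredundant irreducible components (see e.g.\ \cite{MS}): for a monomial ideal $I \subset S$ and a vector $\bd\in\NN^n$ with support $T=\operatorname{supp}(\bd)$, the ideal $\fm^\bd$ appears in the irredundant irreducible decomposition of $I$ if and only if $I\subseteq \fm^\bd$ and the monomial $\sn_\bd := \prod_{i\in T} x_i^{d_i -1}$ satisfies $x_i\cdot \sn_\bd \in I$ for every $i\in T$. Note that $\sn_\bd \notin I$ is then automatic: any $x^\bc \in G(I)$ dividing $\sn_\bd$ would have $c_j \le d_j -1$ for $j\in T$ and $c_k=0$ for $k\notin T$, so no generator of $\fm^\bd$ could divide $x^\bc$, contradicting $I\subseteq \fm^\bd$. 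Both assertions now reduce to direct checks using strong stability.

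For the first assertion, given $x^\ba\in G(I)$ with $\nu(x^\ba)=l$, I would take $T=[l]$, $\bd=\hba$, so that $\sn_{\hba}=x^\ba/x_l$. The containment $I\subseteq \fm^{\hba}$ holds because any $x^\bc\in G(I)$ has $\nu(x^\bc)\le l$, and if $x^\bc\notin \fm^{\hba}$ then $c_i\le a_i$ for $i<l$ and $c_l\le a_l-1$; hence $x^\bc$ divides $x^\ba/x_l$, which would place $x^\ba/x_l \in I$ and contradict the minimality of $x^\ba$. For the corner conditions, $x_l\cdot(x^\ba/x_l)=x^\ba \in I$ is trivial, and for $i<l$ strong stability applied to $x^\ba$ produces $x_i\cdot(x^\ba/x_l)=(x_i/x_l)x^\ba\in I$. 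Thus $\fm^{\hba}$ is an irredundant component.

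For the converse, suppose $\fm^\bb$ is an irredundant component of $I$ with $\bb\in(\ZZ_{>0})^l$, so $T=[l]$ and $\sn_\bb=\prod_{i=1}^l x_i^{b_i-1}$. The characterization gives $x_l\cdot \sn_\bb = x^{\wbb}\in I$. To prove $x^{\wbb}\in G(I)$, suppose some $x^\bc\in G(I)$ strictly divides $x^{\wbb}$. Since $\nu(x^\bc)\le l$, we have $c_i \le b_i-1$ for $i<l$ and $c_l \le b_l$. If $c_l \le b_l-1$, then $x^\bc$ already divides $\sn_\bb$ and we obtain the contradiction $\sn_\bb\in I$. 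Otherwise $c_l=b_l$, and the strictness of the division forces $c_i\le b_i -2$ for some $i<l$; strong stability then yields $(x_i/x_l)x^\bc\in I$, a monomial with $l$-exponent $b_l-1$, $i$-exponent at most $b_i-1$, and other coordinates unchanged, so it divides $\sn_\bb$ and gives the same contradiction. Finally, $\nu(x^{\wbb})=l$ because $b_l>0$, so $x^{\wbb}\in G(I)$ is a generator of the required type.

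The only subtle ingredient is invoking the corner-monomial characterization of irredundant components (which is classical and valid in the non-Artinian setting as well); once it is granted, the argument is a transparent application of strong stability together with the definition of a minimal generator.
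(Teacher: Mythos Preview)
Your argument is essentially correct, but the corner-monomial characterization you invoke is not valid in the generality you state. For arbitrary $T$ the ``only if'' direction fails: with $I=(x_1x_2)$ and the irredundant component $(x_1)$ one has $\sn_\bd=1$ yet $x_1\cdot 1\notin I$. The correct non-Artinian statement replaces $I$ by the saturation $I:\bigl(\prod_{j\notin T}x_j\bigr)^\infty$. In your situation this causes no trouble: you apply the criterion only with $T=[l]$, and since $\nu(I)=l$ every minimal generator of $I$ already lies in $\kk[x_1,\dots,x_l]$, so that saturation equals $I$. (The ``if'' direction you use in the first half does hold in full generality.) With this correction your proof goes through.

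The paper proceeds without invoking any such characterization. For the forward direction it picks the component $\fm^\bb$ witnessing $x^\ba/x_l\notin I$ and pins down $\bb=\hba$ coordinate by coordinate via strong stability; your verification of the corner conditions for $\fm^{\hba}$ is a cleaner packaging of the same content. For the converse, the paper establishes $x^{\wbb}\in I$ by exactly the short argument that underlies the ``only if'' direction in the special case $T=[l]$ (any component excluding $x^{\wbb}$ would be strictly contained in $\fm^\bb$, contradicting irredundancy), and then proves minimality much as you do. So the two proofs are close; yours is slightly more conceptual at the cost of needing the characterization stated precisely.
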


\begin{proof}
Take $x^\ba\in G(I)$ with $\nu(x^\ba)=l$. Since $x^\ba/x_l \notin I$, there is an irreducible component $\fm^\bb$  with $x^\ba/x_l\notin \fm^\bb$. Clearly, $\bb \in (\ZZ_{>0})^l$ now. We will show that $\bb=\hba$. Since $x^\ba \in I \subset \fm^\bb$ and $x^\ba/x_l\notin \fm^\bb$, we have 
$b_i >a_i$  for all $i \le l-1$, and $a_l   =b_l $.  
If $b_i>a_i+1$ for some $i \le l-1$, then $(x_i/x_l) \cdot x^\ba \notin \fm^\bb$, and this is a contradiction. Therefore, $b_i= a_i+1$ holds for all  $i\le l-1$, and we have $\bb=\hba$.

Conversely, we assume that $\fm^\bb$ is an irreducible component of $I$ with $\mathrm{ht}(\fm^\bb)=l$.
First, we will show that $\sm := x^{\wbb}\in I$. For a contradiction, assume that $\sm \notin I$. Then there is an irreducible component $\fm^\bc$ of $I$ with $\sm \notin \fm^\bc$. 
Then we have $c_i \ge b_i$  for all $i < l$, and $c_l > b_l$  (if $\bc \in (\ZZ_{>0})^l$).  
It follows that $\fm^\bc \subsetneq \fm^\bb$. This is a contradiction.

Next we will show that $\sm\in G(I)$. 
Since we have shown that $\sm \in I$, there is $x^\bc \in G(I)$ which divides $\sm$. Clearly, $c_i \le b_i -1$ for all $i \le l-1$, $c_l \le b_l$, and $c_i =0$ for all $i >l$. 
Since $x^\bc \in \fm^\bb$, we have $\nu(\fm^\bc)=l$ and $c_l=b_l>0$. 
Moreover, since $(x_i/x_l)\cdot x^\bc \in \fm^\bb$ for all $i <l$, we have $c_i = b_i -1$ for all $i \le l-1$. Hence we have $\wbb =\bc$, and $\sm = x^{\wbb} =x^\bc \in G(I)$. 
\end{proof}

\begin{cor}
Let $I\subset S$ be a strongly stable ideal, and set $l:=\nu(I)$.  Then we have 
$$\adeg_{n-l}(S/I)=\sum_{\substack{x^\ba\in G(I)\\ \nu(x^\ba)=l}} a_l$$
\end{cor}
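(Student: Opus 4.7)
The plan is to combine Corollary~\ref{adeg} with Proposition~\ref{gene} directly. By Corollary~\ref{adeg}, specializing to $i = n-l$, we have
$$\adeg_{n-l}(S/I) = \sum_{\substack{\ba \in E \\ t(\ba) = l}} e(\ba),$$
where $E$ indexes the irredundant irreducible decomposition $I = \bigcap_{\ba \in E} \fm^\ba$. Since $t(\ba) = l$ means $\ba \in (\Z+)^l$, this sum runs over the height $l$ components of $I$, and $e(\ba)$ is the last coordinate $a_l$.

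Next, I would invoke Proposition~\ref{gene} to re-index this sum. Since $l = \nu(I)$, Proposition~\ref{gene} furnishes a bijection between the set of generators $\{x^\bc \in G(I) \mid \nu(x^\bc) = l\}$ and the set of height $l$ irreducible components $\{\fm^\ba \mid \ba \in E,\ t(\ba) = l\}$, given by $\bc \longmapsto \hbc = (c_1+1,\ldots,c_{l-1}+1,c_l)$. Under this correspondence, the last coordinate $a_l$ of $\ba = \hbc$ is precisely $c_l$, so $e(\hbc) = c_l$. Substituting into the sum above yields
$$\adeg_{n-l}(S/I) = \sum_{\substack{x^\bc \in G(I) \\ \nu(x^\bc) = l}} c_l,$$
which is exactly the claimed formula (after renaming $\bc$ back to $\ba$).

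There is essentially no obstacle here: both ingredients have already been established, and the argument is a routine unwinding of the bijection. The only minor point worth being careful about is matching the notation of $\hba$ between Section~5 (where it denoted a vector in $\Psi(\ba)$) and Section~6 (where it is redefined as the shift $(a_1+1,\ldots,a_{l-1}+1,a_l)$); I would simply use the Section~6 convention throughout the proof to avoid confusion.
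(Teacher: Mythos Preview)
Your proof is correct and follows exactly the same approach as the paper, which simply says ``By Corollary~\ref{adeg} and Proposition~\ref{gene}, the assertion follows.'' You have just spelled out in detail the bijection and the preservation of the last coordinate that the paper leaves implicit.
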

\begin{proof}
By Corollary \ref{adeg} and Proposition \ref{gene}, the assertion follows.
\end{proof}

\begin{rem}
Let $I\subset S$ be a strongly stable ideal with $l:=\nu(I)$. If $S/I$ is Cohen--Macaulay (equivalently, $l= \operatorname{ht}(I)$), 
then Proposition~\ref{gene} directly gives the irreducible decomposition of $I$. 
If  $l > \operatorname{ht}(I)$, then we consider 
$$I:x_l^{\infty}:=\{ \, f \in S \mid \text{$x_l^q f\in I$ for $q \gg 0$ } \}.$$ This  is a strongly stable ideal again, and the intersection of the irreducible components of $I$ whose heights are less than $l$.  Moreover, $G(I:x_l^{\infty})$ can be easily computed from $G(I)$. 
Therefore, combining this operation with Proposition \ref{gene}, we can compute the  irreducible decomposition of $I$.
\end{rem}

\begin{ex}
For the strongly stable ideal $I=(x_1^3,x_1^2x_2, x_1x_2^2, x_1x_2x_3^2, x_1^2x_3^2)$, the generators $x_1x_2x_3^2$ and  $x_1^2x_3^2$ yield  
$(x_1^2,x_2^2,x_3^2)$ and $(x_1^3,x_2,x_3^2)$, respectively.  Next, consider the strongly stable ideal $I':=I:x_3^{\infty}=(x_1^2, x_1x_2)$, and it has an irreducible component $(x_1^2, x_2)$ given by $x_1x_2$, which is also an irreducible component of $I$ itself. 
Finally, $I': x_2^\infty =(x_1)$ itself is  an irreducible component of $I$. Hence the irreducible decomposition of $I$ is 
$$I=(x_1)\cap (x_1^2,x_2)\cap (x_1^2,x_2^2,x_3^2) \cap (x_1^3,x_2,x_3^2).$$
\end{ex}


\begin{thm}\label{right shift}
Let $I \subset S$ be a monomial ideal with the irredundant irreducible decomposition \eqref{irred main eq}.  (Note that the irreducible decomposition of a strongly stable ideal is always in this form.) Then the following are equivalent. 
\begin{itemize}
\item[(1)] $I$ is strongly stable. 
\item[(2)] If $\ba=(a_1, \ldots, a_t) \in E \cap (\ZZ_{>0})^t$,  $a_i >1$ and $i < j \le t$, then there is some $\bb \in E$ such that $\fm^{\ba-\be_i+\be_j} \supset \fm^\bb$, where  $\be_{i}\in \NN^t$ is the $i$-th unit vector. 
\end{itemize}
\end{thm}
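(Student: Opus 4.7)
The plan is to prove the two implications by direct monomial degree comparisons, using the bridge that strong stability forces inclusions of the form $I \subset \fm^{\bc - \be_i + \be_j}$. For $(1) \Rightarrow (2)$, fix $\ba = (a_1, \ldots, a_t) \in E$ with $a_i > 1$ and $i < j \le t$, and set $J := \fm^{\ba - \be_i + \be_j}$. I first prove $I \subset J$ and then extract an irreducible component $\fm^\bb \subset J$ with $\bb \in E$. For the inclusion, take any $\sm \in G(I)$; since $\sm \in \fm^\ba$ we have $\deg_{x_k}(\sm) \ge a_k$ for some $k \le t$. If some such witness $k$ differs from $j$, then $\sm$ lies in $J$ directly (via $x_k^{a_k}$ if $k \ne i$, or via $x_i^{a_i-1}$ if $k = i$). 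Otherwise $k = j$ is the unique witness, so in particular $x_j \mid \sm$, and strong stability gives $(x_i/x_j)\sm \in I \subset \fm^\ba$; the new witness $k'$ cannot lie outside $\{i, j\}$ (else $\deg_{x_{k'}}(\sm) \ge a_{k'}$, contradicting our case assumption), so $k' = j$ (forcing $\deg_{x_j}(\sm) \ge a_j + 1$) or $k' = i$ (forcing $\deg_{x_i}(\sm) \ge a_i - 1$), and each places $\sm$ in $J$. For the extraction step, choose $M$ larger than every entry appearing in every element of $E$ and set
\[
\sn := \prod_{k=1}^{t} x_k^{(\ba - \be_i + \be_j)_k - 1} \cdot \prod_{l=t+1}^{n} x_l^{M}.
\]
Then $\sn \notin J \supset I$, so $\sn \notin \fm^\bb$ for some $\bb \in E$; the inequalities $\deg_{x_l}(\sn) < b_l$ for $l \in \operatorname{supp}(\bb)$ rule out any $l > t$ (since $M$ exceeds every $b_l$) and give $b_k \ge (\ba - \be_i + \be_j)_k$ for each $k \in \operatorname{supp}(\bb) \subset \{1, \ldots, t\}$, which is exactly $\fm^\bb \subset J$.

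For $(2) \Rightarrow (1)$, assume (2) and suppose toward contradiction that there exist $\sm \in I$, $i < j$, with $x_j \mid \sm$ and $(x_i/x_j)\sm \notin I$. Pick $\bc = (c_1, \ldots, c_t) \in E$ with $(x_i/x_j)\sm \notin \fm^\bc$. Since $\sm$ and $(x_i/x_j)\sm$ differ only at the coordinates $i$ and $j$, a coordinate-by-coordinate comparison against the membership $\sm \in \fm^\bc$ forces the configuration $j \le t$, $\deg_{x_j}(\sm) = c_j$, $\deg_{x_i}(\sm) \le c_i - 2$ (whence $c_i > 1$), and $\deg_{x_k}(\sm) < c_k$ for every $k \le t$ with $k \ne j$. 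Condition (2) applied to $\bc$, $i$, $j$ supplies $\bd \in E$ with $\fm^\bd \subset \fm^{\bc - \be_i + \be_j}$; unraveling the inclusion, $\operatorname{supp}(\bd) \subset \{1, \ldots, t\}$ together with $d_k \ge (\bc - \be_i + \be_j)_k$ for each $k \in \operatorname{supp}(\bd)$. A case check on $k \in \operatorname{supp}(\bd)$, split into $k = j$, $k = i$, and $k \ne i, j$, then shows $\deg_{x_k}(\sm) < d_k$ in every case, contradicting $\sm \in I \subset \fm^\bd$.

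The main obstacle sits in the forward direction at the step that promotes $I \subset J$ to the existence of an irreducible component $\fm^\bb$ of $I$ inside $J$: without care, a $\bb \in E$ could in principle have support reaching beyond $\{1, \ldots, t\}$, and the padded monomial $\sn$ is what pins its support back into $\{1, \ldots, t\}$. The backward direction is comparatively routine bookkeeping, but hinges on isolating the exact configuration in which the only witness for $\sm \in \fm^\bc$ is the coordinate $j$; this single case simultaneously pins down $j \le t$, $\deg_{x_j}(\sm) = c_j$, and $c_i > 1$, which is everything needed to invoke condition (2).
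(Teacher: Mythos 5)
Your proof is correct, and the argument for $(1)\Rightarrow(2)$ takes a genuinely different route from the paper's. The paper argues by contradiction: assuming (2) fails for some $\ba$, $i<j$, it picks, for each $\bb\in E$, a generator of $\fm^\bb$ outside $\fm^{\ba-\be_i+\be_j}$, takes the least common multiple $\sm$ of these (so $\sm\in I$ but $\sm\notin\fm^{\ba-\be_i+\be_j}$), and then reads off degrees to show $(x_i/x_j)\sm\notin\fm^\ba$, contradicting strong stability. You instead prove the direct inclusion $I\subset\fm^{\ba-\be_i+\be_j}$ by case analysis on the generators of $I$ using strong stability, and then extract a component $\fm^\bb\subset\fm^{\ba-\be_i+\be_j}$ with the padded monomial $\sn$. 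Your version surfaces a clean intermediate fact that the paper's contradiction argument leaves implicit (namely that whenever $a_i>1$ and $i<j\le t$, strong stability already forces $I\subset\fm^{\ba-\be_i+\be_j}$); the paper's lcm construction, by contrast, builds the relevant witness monomial in one stroke and avoids the need for the separate extraction step. For $(2)\Rightarrow(1)$ the two arguments are essentially the same: a coordinate comparison of $\sm$ against $(x_i/x_j)\sm$ inside an excluding component $\fm^\bc$, followed by an application of (2). The only cosmetic difference is that the paper restricts to the adjacent swap $(x_{i-1}/x_i)$ (legitimate, since strong stability reduces to adjacent swaps), while you handle a general pair $i<j$; both are fine.
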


\begin{proof}
(1)  $\Rightarrow$ (2): For a contradiction, assume that  a strongly stable ideal $I$ does not satisfy (2). Then, for each $\bb \in E$, we have $\fm^{\ba-\be_i+\be_j} \not \supset \fm^\bb$, and we can take a monomial $\sm_\bb \in G(\fm^\bb)$ with $\sm_\bb \not \in \fm^{\ba-\be_i+\be_j}$ (of course, $\sm_\bb=x_k^{b_k}$ for some $k \in [n]$). 
Let $\sm$ be the least common multiple of $\{ \sm_\bb \mid \bb \in E \}$.    
Since $\sm \in \bigcap_{\bb \in E} \fm^\bb =I \subset \fm^\ba$ and $\sm \notin \fm^{\ba-\be_i+\be_j}$, the degree $\deg_{x_k} (\sm)$ with respect to $x_k$ is 
$$\begin{cases}   < a_k  \, \, \, \, \,  ({\rm if}\, \, k \neq i,j),  \\  =a_j    \, \, \, \, \,  ({\rm if}\, \, k=j), \\  <a_i-1    \, \, \, \, \,  ({\rm if}\, \, k =i).  \end{cases}$$
So we have $(x_i/x_j) \cdot \sm \notin \sm^\ba$, and hence $(x_i/x_j) \cdot \sm \notin I$.  It contradicts the assumption that $I$ is strongly stable and $\sm \in I$. 

(2)  $\Rightarrow$ (1): For a contradiction, we assume that $I$ satisfies (2) but it is not strongly stable. 
Then there are some $\sm \in  G(I)$ and some $i \ge 2$ such that  $x_i$ divides  $\sm $ and  $(x_{i-1}/x_i) \cdot \sm \notin \fm^\ba$ for some $\ba =(a_1, \ldots, a_t) \in E$. Then it is easy to see that $a_{i-1}>1$ and $t \ge i$.  
By (2), we have  $\fm^{\ba -\be_{i-1}+\be_i} \supset \fm^\bb$ for some $\bb \in E$.  
 Since $(x_{i-1}/x_i) \cdot \sm \notin \fm^\ba$, we have  $\sm \notin \fm^{\ba -\be_{i-1}+\be_i} $. It contradicts that  $\sm \in I \subset \fm^\bb$.
\end{proof}

\begin{ex}
For a strongly stable ideal  $I = (x_1^2, x_1x_2, x_2^3, x_1x_3, x_2^2x_3)\subset \kk[x_1,x_2,x_3]$, we have the irreducible decomposition 
$$
I=(x_1,x_2^2)\cap(x_1^2,x_2,x_3)\cap(x_1,x_2^3, x_3). 
$$
We consider the irreducible component $\fm^\ba=(x_1,x_2^3, x_3)$ with $\ba = (1,3,1)$. Clearly, 
$$\fm^{\ba-\be_2+\be_3}=\fm^{(1,2,2)}=(x_1,x_2^2, x_3^2) \supset(x_1,x_2^2),$$ where $(x_1,x_2^2)$ is an irreducible component.

Next consider the ideal $J= (x_1^3,x_1^2x_2,x_1x_2^2,x_2^3,x_1x_3)$ with the irreducible decomposition  
$$J=(x_1,x_2^3)\cap (x_1^2,x_2^2,x_3) \cap (x_1^3,x_2,x_3).$$
For the irreducible component $\fm^{\ba}$ with $\ba=(2,2,1)$,  we have 
$$\fm^{\ba-\be_2+\be_3} =\fm^{(2,1,2)} =(x_1^2, x_2,x_3^2) \nsupseteq (x_1,x_2^3),  (x_1^3,x_2,x_3),$$ and $J$ is not strongly stable.
Of course, we can check this directly. In fact, we have $\sm:= x_1x_3 \in J$, but $(x_2/x_3) \cdot \sm = x_1x_2 \notin J$. 
\end{ex}

\section*{Acknowledgments}
We are grateful to Professor Yuji Yoshino for  stimulating  discussion and valuable comments.  
We also thank Professor Gunnar Fl$\o$ystad for telling us about his paper \cite{F}.

\end{document}